\newtheorem*{theorem}{Theorem}
\title{Derivative based global sensitivity analysis and its entropic link}
\date{January, 2025}
\author{ \href{https://orcid.org/0000-0001-8323-7406}{\includegraphics[scale=0.06]{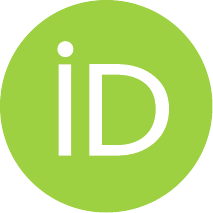}\hspace{1mm}Jiannan Yang}
    \\
  	School of Physics, Engineering and Technology\\
	University of York\\
	Heslington, York, YO10 5DD, UK\\
	\href{mailto:jiannan.yang@york.ac.uk}{jiannan.yang@york.ac.uk}\\
}
\begin{document}
\maketitle

\begin{abstract}
Variance-based Sobol' sensitivity is one of the most well-known measures in global sensitivity analysis (GSA). However, uncertainties with certain distributions, such as highly skewed distributions or those with a heavy tail, cannot be adequately characterised using the second central moment only. Entropy-based GSA can consider the entire probability density function, but its application has been limited because it is difficult to estimate. Here we present a novel derivative-based upper bound for conditional entropies, to efficiently rank uncertain variables and to work as a proxy for entropy-based total effect indices. To overcome the non-desirable issue of negativity for differential entropies as sensitivity indices, we discuss an exponentiation of the total effect entropy and its proxy. Numerical verifications demonstrate that the upper bound is tight for monotonic functions and it provides the same input variable ranking as the entropy-based indices for about three-quarters of the 1000 random functions tested. We found that the new entropy proxy performs similarly to the variance-based proxies for a river flood physics model with 8 inputs of different distributions, and these two proxies are equivalent in the special case of linear functions with Gaussian inputs. We expect the new entropy proxy to increase the variable screening power of derivative-based GSA and to complement Sobol'-indices proxy for a more diverse type of distributions. 
\end{abstract}

\keywords{sensitivity proxy; sensitivity inequality; conditional entropy; exponential entropy; DGSM; Ishigami function}

\section{Introduction}
\label{sec:1}
This research is motivated by applications of global sensitivity analysis (GSA) towards mathematical models. The uncertain inputs of a mathematical model induce uncertainties in the output. GSA helps to identify the influential inputs and is becoming an integral part of mathematical modelling. 

The most common GSA approach examines variability using the output variance. Variance-based methods, also called Sobol' indices, decompose the function output into a linear combination of input and interaction of increasing dimensionality, and estimate the contribution of each input factor to the variance of the output \citep{sobol1993sensitivity,saltelli2008book}. As only the 2nd-order moments are considered, it was pointed out that the variance-based sensitivity measure is not well suited for heavy-tailed or multimodal distributions \citep{auder2008entropy,pianosi2015simple,liu2006relative}. Entropy is a measure of uncertainty similar to variance: higher entropy tends to indicate higher variance (for Gaussian, entropy is proportional to log variance). Nevertheless, entropy is moment-independent as it is based on the entire probability density function of the model output. It was shown in \cite{auder2008entropy} that entropy-based methods and variance-based methods can sometimes produce significantly different results. 

Both variance-based and entropy-based global sensitivity analysis (GSA) can provide quantitative contributions of each input variable to the output quantity of interest. However, the estimation of variance and entropy based sensitivity indices can become expensive in terms of the number of model evaluations. For example, the computational cost using sampling based estimation for variance-based total effect indices is $N(d+1)$ \citep{saltelli2008book, puy2020battle}, where $N$ is the base sample number and $d$ is the input dimension. Large values of $N$, normally in the order of thousands or tenths of thousands, are needed for more accurate estimate, and the computational cost has been noted as one of the main drawbacks of the variance-based GSA in \cite{saltelli2008book}. In addition, it was noted in \cite{auder2008entropy} that although both variance-based and entropy-based sensitivity analysis take long computational time, the convergence for entropy-based indices is even slower. 

In contrast, for a differentiable function, derivative-based methods can be more efficient. For example, Morris' method \citep{morris1991} constructs a global sensitivity measure by computing a weighted mean of the finite difference approximation to the partial derivatives, and it requires only a few model evaluations. The computational time required can be many orders of magnitude lower than that for estimation of Sobol’ sensitivity indices as demonstrated by \cite{kucherenko2009monte} and it is thus often used for screening a large number of input variables. 

Previous studies have found a link between the derivative-based measures and variance-based total effect indices. In \cite{campolongo2007effective}, a sensitivity measure  $\mu*$ is proposed based on the absolute values of the partial derivatives. It is empirically demonstrated that for some practical problems, $\mu*$ is similar to the variance-based total indices. In \cite{WAINWRIGHT2014gsa}, the variance-based sensitivity indices are interpreted as difference-based measures, where the total sensitivity index is equivalent to taking a difference in the output when perturbing one of the parameters with the other parameters fixed. The similarity to partial derivatives helps to explain why the mean of absolute elementary effects from Morris' method can be a good proxy for the total sensitivity index for detecting unessential variables. 

Observing the empirical success of the modified Morris' measure, \cite{kucherenko2009derivative} have proposed the so-called derivative-based global sensitivity measures (DGSM). This importance criterion is similar to the modified Morris' measure, except that the squared partial derivatives are used instead of their absolute values. In addition, an inequality link between variance-based global sensitivity indices and the DGSM is established in the case of independent Uniform or Gaussian input variables. This inequality between DGSM and variance-based GSA has been extended to input variables belonging to the large class of Boltzmann probability measures in \cite{lamboni2013sobol}. This link is via the Poincar\'{e} inequality and an optimal value for the scaling Poincar\'{e} constant has been developed in \cite{roustant2017poincare}. 

Inspired by the success of derivative-based proxies for Sobol' indices, in this paper, we present a novel derivative-based upper bound for entropy-based total effect indices. The key idea here is to make use of a well-known inequality between the entropy of a continuous random variable and its deterministic transformation. This inequality can be seen as a version of the information processing inequality and is shown here to provide an upper bound for the total effect entropy sensitivity measure. The entropy upper bounds are demonstrated to efficiently rank uncertain variables and can thus potentially be used as a proxy for entropy-based total effect indices for screening purposes. And that is the main contribution of this paper. 

In addition, via exponentiation, we extend the upper bound to the widely used DGSM for a total sensitivity measure based on entropy power (also known as effective variance). In the special case with Gaussian inputs and linear functions, the proposed new proxy for entropy GSA is found to be equivalent to the proxy for variance-based total effect sensitivity. Furthermore, unlike the variance proxy, the inequality link between derivatives and entropy does not require the random inputs to be independent. The new entropy proxy is thus expected to not only increase the variable screening power of derivative-based GSA, but can also complement variance proxies for a more diverse type of distributions.

Note that we focus on the standard derivative-based sensitivity measures in this paper. A closely related derivative-based sensitivity analysis technique is active subspace, which makes use of the leading eigenspaces of the second moment matrix of the function partial derivatives. DGSM indices are the diagonal of this second moment matrix. Sensitivity indices based on active subspace have been found to bound DGSM and Sobol' total effect indices for scalar-valued outputs \citep{constantine2017global}, and a generalization to vector-valued functions with Gaussian inputs has been discussed by \cite{zahm2020gradient}. Instead of the function derivatives, active sensitivity directions can also be obtained from the derivatives of the output distributions using the leading eigenvectors the Fisher Information Matrix (FIM) \citep{Yang_SAframework_2022}. Use of the leading symplectic eigenspace of the FIM has also been proposed for decision oriented sensitivity analysis \citep{Yang_Symplectic_2023}. 

It should also be noted that, in addition to entropy-based measures, there are many other moment-independent sensitivity measures, such as the $\delta$-indicator \citep{BORGONOVO_deltaMeasure_2007}, maximum mean discrepancy \citep{da2021kernel}, Kolmogorov–Smirnov statistic \citep{pianosi2015simple} and Kullback–Leibler (KL) divergence \citep{krzykacz2001epistemic,liu2006relative}. More discussions can be found in \citep{BORGONOVO_review_2016}. 

Many of the above mentioned measures can be seen as special cases of the Csiszar f-divergence between the conditional and unconditional output densities \citep{BORGONOVO_review_2016, da2015global}. The expected f-divergence can be reformulated in terms of the joint density of the input and output distributions and the product of their marginals, and this allows the application of general dependence measures to sensitivity analysis, such as the Distance Correlation based on characteristic functions and the Hilbert–Schmidt Independence Criterion (HSIC) that generalizes the notion of covariance between two random variables \citep{da2015global}.

In what follows, we will first review global sensitivity measures in Section \ref{sec:2}, where the motivation for the entropy-based measure and its proxy is discussed with an example. In Section \ref{sec:3}, we first establish the inequality relationship between the total effect entropy measure and the partial derivatives of the function of interest, where mathematical proofs are provided. An exponential version of the inequality link is further proposed in Section \ref{sec:3}. In Section \ref{sec:4}, we provide numerical assessments with analytical functions. In Section \ref{sec:5}, a river flood physics model is used to demonstrate the effectiveness of the new entropy proxy. Concluding remarks are given in Section \ref{sec:6}. 
\section{Global sensitivity measures}
\label{sec:2}
In our context, an important question for GSA is: `Which model inputs can be fixed anywhere over its range of variability without affecting the output?' \citep{saltelli2008book}.  In this section, we first review both variance-based and derivative-based sensitivity measures, which can provide answers to the above `screening' question. We then use a simple example to motivate the use of entropy-based sensitivity indices. 
\subsection{Total variance effect and its link with derivative-based GSA}
\label{sec:2.1}
The variance-based total effect measure accounts for the total contribution of an input to the output variation, and is often a preferred approach due to its intuitive interpretation and quantitative nature.

Let us denote $\mathbf{x} = (x_1, x_2, \dots, x_d)$ as independent random input variables, and $y$ being the output of our computational model represented by a function $g$, such that $y = g(\mathbf{x}) $. 

The variance-based GSA decompose the output variance $V(Y)$ into conditional terms \citep{Hoeffding1948,sobol1993sensitivity}:
\begin{equation} \label{eq:Vy}
   V(Y) = \sum_i{V_i} + \sum_i \sum_{i < j} V_{ij} + \cdots + V_{1,2,\dots,d}
\end{equation}
where
\begin{equation*} 
   V_i = V[\mathbb{E} (Y|X_i)] ; \quad V_{ij} = V[\mathbb{E} (Y|X_i, X_j)] - V[\mathbb{E} (Y|X_i)] - V[\mathbb{E} (Y|X_j)]  
\end{equation*}
and so on for the higher interaction terms. $V_i$ measures the first order effect variance and $V_{ij}$ for a second order effect variance, where their contributions to the unconditional model output variance can be quantified as $V_i/V(Y)$ and $V_{ij}/V(Y)$ respectively. Analogous formulas can be written for higher-order terms, enabling the analyst to quantify the higher-order interactions.

The total order sensitivity index is then defined as:
\begin{equation} \label{eq:Ti}
    S_{T_i} = \frac{\mathbb{E} [V(Y|\mathbf{X}_{\sim i})]}{V(Y)} = \frac{ V_{T_i}}{V(Y)}
\end{equation}
where $\mathbf{X}_{\sim i}$ is the set of all inputs except $X_i$, and $\mathbb{E} [V(Y|\mathbf{X}_{\sim i})] = V(Y) - V[\mathbb{E} (Y|\mathbf{X}_{\sim i})]$ is the remaining variance if the true values of $\mathbf{X}_{\sim i}$ can be determined. The total order sensitivity index measures the total contribution of the input $X_i$ to the output variance, including its first order effect and its interactions of any order with other inputs.

When the function $g(\cdot)$ is differentiable, local sensitivity can be measured using the square integrable partial derivatives $g_{x_i}=\partial g/\partial x_i$ which can be seen as a limiting version of Morris' elementary effect when the incremental step tends to zeros \citep{kucherenko2009derivative}. The partial derivative depends on a nominal point. For global sensitivity analysis, an average of the partial derivatives can be taken over the input parameter space: 
\begin{equation} \label{eq:Deri.Morris}
   \mu_i = \mathbb{E} \left[ \left| \frac{\partial g (\mathbf{X})}{\partial x_i} \right| \right] =\int \left| \frac{\partial g (\mathbf{x})}{\partial x_i} \right| f_X (\mathbf{x}) d \mathbf{x}
\end{equation}
for $i = 1, 2, \dots, d$ and $f_{X}(\mathbf{x})$ is the joint probability density function (PDF) of $\mathbf{x}$. As pointed out by \cite{kucherenko2009derivative}, for uniformly distributed inputs, the measure $\mu_i$ can be seen as a limiting version of the modified Morris' index $\mu^*$. 

Based on that observation, the Derivative-based Global Sensitivity Measure (DGSM): 
\begin{equation} \label{eq:Deri.DGSM}
   \nu_i = \mathbb{E} \left[ \left(  \frac{\partial g (\mathbf{X})}{\partial x_i} \right)^2 \right] = \int \left(  \frac{\partial g (\mathbf{x})}{\partial x_i} \right)^2 f_X (\mathbf{x}) d \mathbf{x}
\end{equation}
has been proposed to be used as a proxy for $S_{T_i}$ \citep{kucherenko2009derivative} to detect un-influential input variables. In particular, the total sensitivity variance $V_{T_i} = \mathbb{E} [V(Y|\mathbf{X}_{\sim i})]$ is upper bounded by DGSM via the following inequality: 
\begin{equation} \label{eq:STi_DGSM}   
   V_{T_i} \leq C_i \nu_i
\end{equation}
based on Poincar\'{e} inequality and the Poincar\'{e} constants were found to be optimal for $C_i$ \citep{lamboni2013sobol, roustant2017poincare}. Note that independent input variables are required for the DGSM-based upper bound as it is based on variance decomposition. 

Eq \ref{eq:STi_DGSM} thus provides a screening method using the upper bound, which is typically computationally faster compared to a direct estimation of the Sobol' indices. Tighter the upper bound, more effective the low cost screening is. 

Divide Eq \ref{eq:STi_DGSM} by the output variance $V(Y)$ from both sides, we then have the upper bound $S_{T_i} \leq C_i \nu_i /V(Y)$ which can be used as a proxy for $S_{T_i}$ for variable screening. For Gaussian inputs with variance $\sigma_i^2$, $C_i = \sigma_i^2 $ and the inequality in Eq \ref{eq:STi_DGSM} becomes $S_{T_i} \leq \sigma_i^2 \nu_i / V(Y)$ as given in \cite{kucherenko2009derivative}.
\subsection{A motivating example and entropy-based sensitivity}
\label{sec:2.2}
\cite{pianosi2015simple} pointed out that a major limitation of variance-based sensitivity indices is that they implicitly assume that output variance is a sensible measure of the output uncertainty. However, if the output distribution is multi-modal or if it is highly skewed, using variance as a proxy of uncertainty may lead to contradictory results. 

To illustrate this point, we look at the simple function $y=x_1/x_2$. In this case, the two inputs both follow the chi-squared $\chi^2$ distribution with $x_1 \sim \chi^2(10) $ and $x_2 \sim \chi^2(13.978)$, and are assumed to be independent. This results in a positively skewed distribution of $Y$ with a heavy tail. This example has been used by \cite{liu2006relative} to demonstrate the limitation of variance-based sensitivity indices, where they propose a Kullback-Leibler (KL) divergence based metric:
\begin{equation} \label{eq:KL}   
   KL_{T_i} = \int f_1(y(x_1,\dots,\bar{x}_i,\dots,x_d)) \ln \frac{f_1(y(x_1,\dots,\bar{x}_i,\dots,x_d))}{f_0(y(x_1,\dots,x_i,\dots,x_d))} dy
\end{equation}
In Eq \ref{eq:KL}, $f_1(y)$ and $f_0(y)$ are the PDFs of the output, depending on whether $x_i$ is fixed, usually at its mean. The larger the $KL_{T_i}$, the more important $X_i$ is. It was found in \cite{liu2006relative} that the effect of $X_1$ is higher in terms of divergence of the output distribution, but the variance-based total index shows that $X_1$ and $X_2$ are equally important. The higher influence of $X_1$ has also been confirmed in \cite{pianosi2015simple} where the sensitivity is characterised by the change of cumulative distribution function of the output. 
\begingroup
\def\arraystretch{0.5}
\setlength{\tabcolsep}{4pt}
\begin{table}
	\caption{Sensitivity results for $y = x_1/x_2$. }
    \centering
\makebox[\textwidth][c] {
	\begin{tabular}{cccc} 	
		\toprule
		             & \multirow{2}{3cm}{ \centering Variance-based} & \multirow{2}{4cm}{ \centering K-L divergence based } &   \multirow{2}{4cm}{\centering Entropy-based}\\  
		               & &  \\
		         Variable &  $S_{T_i}$&  $KL_{T_i}$  & $\eta_{T_i}$ \\  
		\cmidrule(lr){1-4}   
	              $x_1 \sim \chi^2(10)$ &  0.546  & 0.1571  & 0.510  \\
	              $x_2 \sim \chi^2(13.978)$ &  0.547 & 0.0791  & 0.213 \\ 
		\bottomrule
	\end{tabular}
	}
	\label{tab:x1/x2}
\end{table}
\endgroup

We reproduce the sensitivity results of $S_{T_i}$ and $KL_{T_i}$ in Table \ref{tab:x1/x2} from \cite{liu2006relative}. In addition, we also compare the results with the entropy-based total sensitivity index (ETSI) \citep{kala2021global}:  
\begin{equation} \label{eq:Entropy.total}
   \eta_{T_i} =  \frac{\mathbb{E} [H(Y|\mathbf{X}_{\sim i} )]}{H(Y)} = \frac{H_{T_i}}{H_Y}
\end{equation}
which measures the remaining entropy of $Y$ if the true values of $\mathbf{X}_{\sim i}$ can be determined, in analogy to the variance-based total effect index $S_{T_i}$. $H$ is the differential entropy, that is:
\begin{equation*} \label{eq:Entropy.def}
   H_Y=H(Y)= - \int f(y) \ln f(y) dy
\end{equation*}
and the conditional differential entropy is defined accordingly as:
\begin{equation*} \label{eq:Entropy.conditional}
   \mathbb{E} [H(Y|\mathbf{X}_{\sim i} )]  = - \int f(y, \mathbf{x}_{\sim i}) \ln f(y| \mathbf{x}_{\sim i}) dy d{\mathbf{x}_{\sim i}}
\end{equation*}
where the integral is with respect to the support set of the random variables and where $\sim i$ indicates the index ranges from $1$ to $d$ excluding $i$. The conditional PDF can in general be written as $f(y| \mathbf{x}_{\sim i})= f(y, \mathbf{x}_{\sim i})/ f(y)$, except for cases where the differential entropy becomes infinite. 

The total effect entropy $H_{T_i}$ is a global measure of uncertainty as the expectation is with respect to all possible values of $\mathbf{X}_{\sim i}$. $\eta_{T_i} \leq 1$ because $ H_{T_i} ={\mathbb{E} [H(Y|\mathbf{X}_{\sim i} )]} \leq {H(Y)}$ with equality if and only if $\mathbf{X}_{\sim i} $ and $Y$ are independent  \citep{cover1999elements}. However, $\eta_{T_i}$ can be negative as it is defined using differential entropy. This is undesirable as sensitivity indices and later in Section \ref{sec:3} we propose an exponentiation to overcome this issue.  

$\eta_{T_i}$ has been estimated numerically based on the histogram method given in Appendix \ref{appendix:a}, where $10^7$ samples are used. Table \ref{tab:x1/x2} shows that the entropy-based $\eta_{T_i}$ is able to effectively identify the higher influence of $X_1$. Note that different from $KL_{T_i}$ which is conditional on the value of $x_i$($x_i$ are set at their mean values in Table \ref{tab:x1/x2}), $\eta_{T_i}$ is an un-conditional sensitivity measure as all possible values of the inputs are averaged out. 

We note in passing that analogously to the variance based sensitivity indices, a first order entropy index can also be defined as $\eta_i = (H(Y) - \mathbb{E}[H(Y|X_i)])/{H(Y)} = I(X_i,Y)/{H(Y)}$ \citep{krzykacz2001epistemic}. $I(X_i,Y)$ is the mutual information which measures how much knowing $X_i$ reduces uncertainty of $Y$ or vice versa. The index $\eta_i$ can thus be regarded as a measure of the excepted reduction in the entropy of of $Y$ by fixing $X_i$. 
\subsection{Summary}
\label{sec:2.4}
From the motivating example, it became clear that $S_{T_i}$ might not be very indicative for variable rankings with outputs of general distribution shapes. This is especially the case for highly skewed or multi-modal distributions. This limitation is overcome by the entropy-based measures which are applicable independent of the underlying shape of the distribution. 

However, the entropy-based ETSI have limited application in practice, mainly due to the heavy computational burden where the knowledge of conditional probability distributions are required. Both histogram and kernel based estimation methods have computational challenges for entropy-based sensitivity indices \citep{pianosi2015simple}. 

Motivated by the above issues and inspired by the low-cost sensitivity screening proxy for variance-based measures, in the next section, we will propose a computationally efficient upper bound for the entropy-based total sensitivity measure. We then extend the upper bound to the DGSM indices via exponentiation, and show that in the special case with Gaussian inputs and linear functions, the proposed new proxy for entropy GSA is equivalent to the proxy for variance-based total effect sensitivity. 
\begin{figure}[!t]
	\centering
	\includegraphics[width=16 cm]{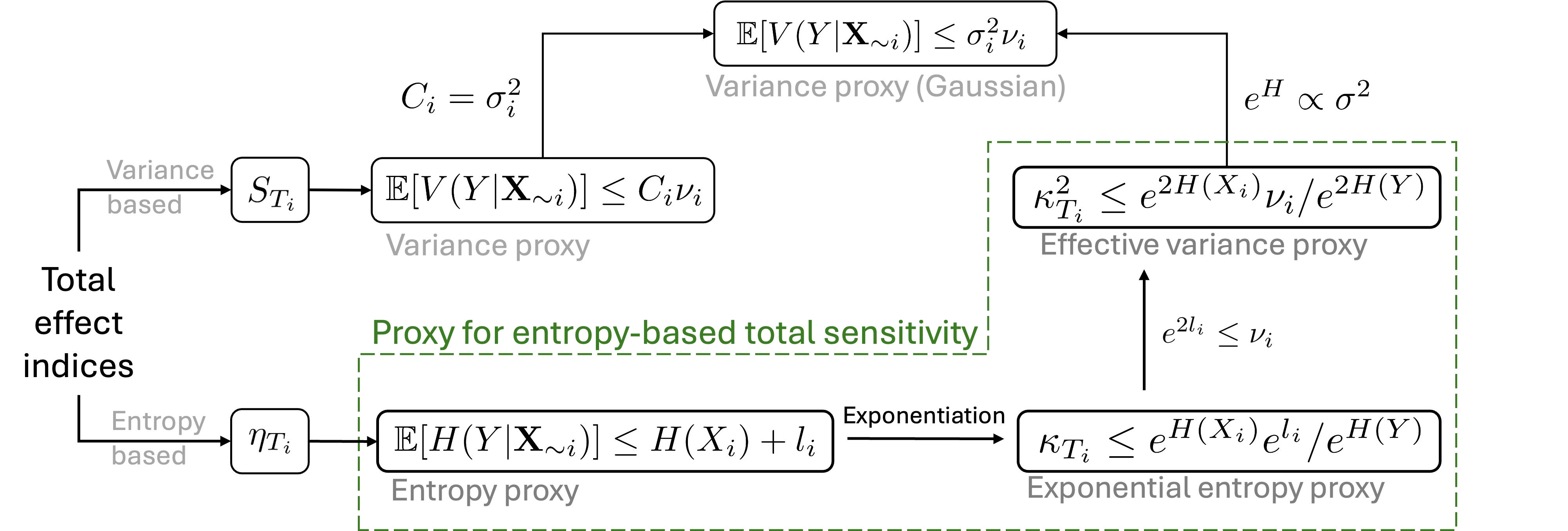}
	\caption{An overview for the relationship between entropy and variance proxies where the entropy proxies developed in this paper are highlighted in the box with dash lines.}
	\label{fig:flowchart}
\end{figure}

An overview of the above mentioned relationship between entropy and variance proxies is summarised in Figure \ref{fig:flowchart}. The sensitivity proxies are represented by the upper bounds of the inequalities in Figure \ref{fig:flowchart}, where $l_i$ and $\nu_i$ are derivative-based quantities. The entropy proxies to be developed in the next sections are highlighted in the box with dash lines. 
\section{Link between the partial derivatives and the total effect entropy measure}
\label{sec:3}
In this section we will consider a differentiable function $y = g(\mathbf{x}) $. Recall that our interest here is for applications of global sensitivity analysis (GSA) towards mathematical models, where a physical phenomenon is typically studied with a complex numerical code. The computation of the partial derivatives can then be obtained via the companion adjoint code, or numerically estimated by a finite difference method. For example, the derivative-based DGSM can be estimated by finite difference, and this can be performed efficiently via Monte Carlo sampling as discussed in \cite{kucherenko2009monte}.
\subsection{An upper bound for the total effect entropy}
\label{sec:3.1}
For a general vector transformation $\mathbf{Y} = \mathbf{g} (\mathbf{X})$, the differential entropy of the output is related to the input via \citep[p.660]{papoulis2002book}:
\begin{equation} \label{eq:Link.1}
   \begin{split}
       H(\mathbf {Y}) &  \leq  H(\mathbf{X}) + \int f_{X}(\mathbf{x}) \ln {\left| \det{\mathbb{J}}\right|} d\mathbf{x}
  \end{split}
\end{equation}
where $\mathbb{J}$ is the Jacobian matrix with $\mathbb{J}_{ij} = \partial g_i / \partial x_j$ and $f_{X}(\mathbf{x})$ is the probability density function (PDF) of $\mathbf{X}$. $\mathbf{g(X)}$ is assumed to be differentiable and the partial derivatives are assumed to be square integrable. The above inequality becomes an equality if the transform is a bijection, i.e. an invertible transformation. Note that there is no independence assumption for the inputs of the inequality above. 

As shown in \cite{papoulis2002book}, Eq \ref{eq:Link.1} can be proved substituting the transformed PDF $f_Y(\mathbf{y}) = f_X(\mathbf{x})/\det{\mathbb{J}}$ into the expression of $H(\mathbf{Y}) = - \int f(\mathbf{y}) \ln f(\mathbf{y}) d\mathbf{y}$ and note there will be a reduction of entropy if the transformation is not one-to-one. Following this line of thought, Eq \ref{eq:Link.1} can also be seen as one version of the data processing inequality, where the transformation does not increase information \citep{ geiger2011information}. 

Given the data processing inequality in Eq \ref{eq:Link.1}, we have the following theorem to upper bound the total effect entropy:
\begin{theorem}
For a differentiable deterministic function $y = g(\mathbf{x}): \mathbb{R}^d \rightarrow \mathbb{R}$ with continuous random inputs, there exists an inequality for the total effect entropy:
\begin{equation} \label{eq:proposition}   
       H_{T_i} \leq H(X_i) + l_i
\end{equation}
where $H_{T_i}$ is the total effect entropy which is an expected conditional entropy $H_{T_i} = \mathbb{E}[H(Y|\mathbf{X}_{ \sim i})]$, where $\sim i$ indicates the index ranges from $1$ to $d$ excluding $i$. $H(X_i)$ is the differential entropy of the input variable $X_i$ and $l_i$ is the expected log-derivatives $ l_i =  \mathbb{E} \left[ \ln {\left| {\partial g (\mathbf{x})}/{\partial x_i} \right|} \right]$. 
\end{theorem}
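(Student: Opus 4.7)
The plan is to reduce the $d$-dimensional total effect entropy to a one-dimensional instance of the transformation inequality (\ref{eq:Link.1}) by conditioning on $\mathbf{X}_{\sim i}$. Fixing $\mathbf{X}_{\sim i} = \mathbf{x}_{\sim i}$, the output $Y = g(X_i; \mathbf{x}_{\sim i})$ becomes a deterministic scalar transformation of the single random variable $X_i$, whose conditional law is $f(x_i \mid \mathbf{x}_{\sim i})$. The Jacobian of this scalar transformation is simply $\partial g / \partial x_i$ evaluated at $(x_i, \mathbf{x}_{\sim i})$, so applying the scalar case of (\ref{eq:Link.1}) to the conditional distribution gives
\begin{equation*}
H(Y \mid \mathbf{X}_{\sim i} = \mathbf{x}_{\sim i}) \;\leq\; H(X_i \mid \mathbf{X}_{\sim i} = \mathbf{x}_{\sim i}) + \int f(x_i \mid \mathbf{x}_{\sim i}) \,\ln \left| \frac{\partial g}{\partial x_i} \right| \, dx_i.
\end{equation*}

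The next step is to integrate both sides against the marginal density $f(\mathbf{x}_{\sim i})$. By definition, the left side becomes $H_{T_i} = \mathbb{E}[H(Y \mid \mathbf{X}_{\sim i})]$. The first term on the right side becomes the conditional entropy $H(X_i \mid \mathbf{X}_{\sim i})$, and the second term collapses, via the tower property and the joint density $f(x_i, \mathbf{x}_{\sim i})$, to exactly $l_i = \mathbb{E}[\ln |\partial g / \partial x_i|]$ as defined in the statement.

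The final step is to bound $H(X_i \mid \mathbf{X}_{\sim i}) \leq H(X_i)$, which is the standard ``conditioning reduces entropy'' inequality for differential entropy (\citealt{cover1999elements}), holding with equality when $X_i$ is independent of $\mathbf{X}_{\sim i}$. Chaining the inequalities then yields the claimed bound $H_{T_i} \leq H(X_i) + l_i$. Notably this argument never required independence of the inputs, which matches the remark made earlier in the paper that the entropy-based proxy does not need that assumption.

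The main obstacle I anticipate is not conceptual but technical: justifying that the scalar form of (\ref{eq:Link.1}) really does apply pointwise in $\mathbf{x}_{\sim i}$, which requires $\partial g / \partial x_i$ to be defined almost everywhere and the log-derivative to be integrable against the conditional density. The inequality (\ref{eq:Link.1}) is sharp only for bijective maps, so for each fixed $\mathbf{x}_{\sim i}$ the slack in our bound corresponds exactly to the extent that $g(\,\cdot\,; \mathbf{x}_{\sim i})$ fails to be one-to-one in $x_i$; this is worth flagging because it explains why the bound is expected to be tight for monotonic functions, as the abstract already announces.
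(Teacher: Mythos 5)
Your proof is correct, and it reaches the bound by a genuinely different route from the paper. The paper augments the scalar map to a $d$-dimensional one by appending dummy coordinates $y_i = x_i$ for $i \neq 1$, observes that the resulting Jacobian is triangular with determinant $\left|\partial g/\partial x_1\right|$, applies the vector form of the transformation inequality to get $H(Y,\mathbf{X}_{\sim 1}) \leq H(\mathbf{X}) + l_1$, and then unpacks both sides with the chain rule $H(Y,\mathbf{X}_{\sim 1}) = \mathbb{E}[H(Y|\mathbf{X}_{\sim 1})] + H(\mathbf{X}_{\sim 1})$ and the subadditivity bound $H(\mathbf{X}) \leq H(X_1) + H(\mathbf{X}_{\sim 1})$. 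You instead condition on $\mathbf{X}_{\sim i}$ first, apply only the one-dimensional change-of-variables inequality to each section $x_i \mapsto g(x_i;\mathbf{x}_{\sim i})$, average, and finish with ``conditioning reduces entropy.'' The two arguments are morally equivalent --- the paper's subadditivity step is exactly your $H(X_i|\mathbf{X}_{\sim i}) \leq H(X_i)$ in disguise, via the chain rule $H(\mathbf{X}) = H(X_i|\mathbf{X}_{\sim i}) + H(\mathbf{X}_{\sim i})$ --- but yours buys two things: it needs only the scalar version of the inequality rather than the vector version with the dummy-variable and triangular-Jacobian bookkeeping, and it makes the intermediate sharper bound $H_{T_i} \leq H(X_i|\mathbf{X}_{\sim i}) + l_i$ explicit, which is strictly tighter for dependent inputs and cleanly isolates the two sources of slack (non-injectivity of each section, and dependence of $X_i$ on $\mathbf{X}_{\sim i}$). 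The paper's formulation, for its part, connects the result directly to the data-processing reading of Eq.~\ref{eq:Link.1} that motivates the whole section. Your closing technical caveats (almost-everywhere differentiability and integrability of $\ln\left|\partial g/\partial x_i\right|$ against the conditional density) are the right ones and are implicitly assumed, not verified, in the paper as well.
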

\begin{proof}
Set $y_1 = g_1(\mathbf{x}) = g(\mathbf{x})$ where $\mathbf {x} = (x_1,x_2, \dots, x_d)$, and introduce dummy variables $y_i = g_i(\mathbf{x})=x_i$ with $i = 2, \dots, d$. In this setting, the Jacobian matrix from the 2nd row onwards, i.e. $i \geq 2$,  $\partial g_i / \partial x_j = 1$ when $i=j$ and   $\partial g_i / \partial x_j = 0$ when $i \neq j$. Therefore, the Jacobian matrix in this case is a triangular matrix. As a result, the Jacobian determinant is the product of the diagonal entries: 
\begin{equation*} \label{eq:Link.2}
\det{\mathbb{J}} = \left| \frac {\partial g_1}{\partial x_1} \times
\underbrace{1 \times \dots \times 1 }_{2 \, \text{to}  \, d}
\right| 
= \left| \frac {\partial g_1}{\partial x_1} \right|
\end{equation*}

The information processing inequality from Eq \ref{eq:Link.1} can thus be expressed as: 
\begin{equation*} \label{eq:Link.3}
   \begin{split}
       H(\mathbf {Y}) \leq H(\mathbf{X}) + \int f_{X}(\mathbf{x}) \ln {\left| \frac {\partial g_1(\mathbf{x})}{\partial x_1} \right|} d\mathbf{x}
  \end{split}
\end{equation*}
where $\mathbf{Y} = \{ Y, X_2, X_3, \dots, X_d \}$. 

On the left hand side of the above inequality, the joint entropy of $\mathbf{Y}$ can be expressed using the conditional entropies as $H(\mathbf{Y}) = \mathbb{E}[H(Y|\mathbf{X}_{\sim 1}) ]+ H(\mathbf{X}_{\sim 1})$ using the chain rule for differential entropies \citep{cover1999elements}. On the right hand side, we have $H(\mathbf{X}) \leq H(X_1) + H(\mathbf{X}_{ \sim 1})$ using the subadditivity property of the joint entropy of the input variables. The joint entropy $H(\mathbf{X}) $ becomes additive if the input variables are independent. 

Putting these together, the above inequality using the variable $x_1$ then becomes:
\begin{equation*} 
       \mathbb{E}[H(Y|\mathbf{X}_{\sim 1})] \leq H(X_1) + l_1
\end{equation*}
The reasoning above uses the first variable $x_1$ as an example. However, the results hold for any variables via simple row/column exchanges, which only affects the sign of the determinant but not its modulus. 
\end{proof}

The total effect entropy $H_{T_i} = \mathbb{E}[H(Y|\mathbf{X}_{\sim i})]$ is a global measure of uncertainty as the expectation is with respect to all possible values of $\mathbf{X}_{\sim i}$. It measures the remaining entropy of $Y$ if the true values of $\mathbf{X}_{\sim i}$ can be determined, in analogy to the total effect variance $V_{T_i}$.

The total effect entropy inequality from Eq \ref{eq:proposition} thus demonstrates that, for a differentiable function $y = g(\mathbf{x})$, the entropy-based total sensitivity is bounded by the expectation of log partial derivatives of the function, with the addition of the entropy of the input variable of interest. And this inequality becomes an equality if the input variables are independent and the transformation $g(\cdot)$ has a unique inverse. 

The inequality in Eq \ref{eq:proposition} is one of the main contributions of this paper. It establishes an upper bound for the total effect entropy $H_{T_i}$ for ETSI given in Eq \ref{eq:Entropy.total} using computationally efficient partial-derivative based functionals. As smaller $l_i + H(X_i)$ tends to indicate smaller total effect entropy, it can thus be used to screen un-influential variables and work as a low cost proxy for entropy-based indices. 
 subsection{Exponential entropy based total sensitivity measure}
\label{sec:3.2}
The use of  $l_i + H(X_i)$ as a screening proxy for the total effect entropy is similar to the DGSM-based upper bound for the variance-based $S_{T_i}$ described in Eq \ref{eq:STi_DGSM}. However, there are two issues with the differential entropy based sensitivity measure: 1) as pointed out in \cite{kala2021global}, entropy for continuous random variables (aka differential entropy) can become negative and this is undesirable for sensitivity analysis. More importantly,  the inequality in Eq \ref{eq:proposition} is not valid when normalised by a negative $H(Y)$; 2) the interpretation of conditional entropy is not as intuitive as variance based sensitivity indices. This is partly due to the fact that variance-based methods is firmly anchored in variance decomposition, but also because entropy measures the average information or non-uniformity of a distribution as compared to variance which measures the spread of data around the mean. Although non-uniformity can be seen as a suitable measure for epistemic uncertainties \citep{krzykacz2001epistemic}, its interpretation for GSA in a general setting is less intuitive. 

To overcome these two issues, we propose to use exponential entropy as a entropy-based measure for global sensitivity analysis. Although not directly investigated, studies in \cite{auder2008entropy} have noted that an exponentiation of the standard entropy-based sensitivity measures may improve its discrimination power. We take an exponentiation of the total effect entropy inequality in Eq \ref{eq:proposition}:
\begin{equation} \label{eq:ExponentialBound}   
       e^{H_{T_i}} \leq  e^{H(X_i)} e^{l_i}
\end{equation}
where we recall that $l_i =  \mathbb{E} \left[ \ln {\left| \partial g(\mathbf{x})/\partial x_i \right|} \right]$. 
Divide both sides of Eq \ref{eq:ExponentialBound} by $e^{H(Y)}$:
\begin{equation} \label{eq:kappa}   
       \kappa_{T_i} = \frac{e^{H_{T_i}}} {e^{H(Y)}} \leq  \frac{e^{H(X_i)}}{e^{H(Y)}} e^{l_i}
\end{equation}
where $\kappa_{T_i}$ can be considered as the exponential entropy based total sensitivity indices (eETSI), and the upper bound can then be used as a proxy for $\kappa_{T_i}$ to detect less influential input variables. As the total effect entropy $H_{T_i}=\mathbb{E}[{H(Y|\mathbf{X}_{ \sim i})}] \leq H(Y)$, we then have $0 < \kappa_{T_i} \leq 1$ which is desirable as sensitivity indices. 

In addition, the exponential entropy based eETSI $\kappa_{T_i}$ and the un-normalised $e^{H_{T_i}}$ have a more intuitive interpretation as GSA indices as compared to the standard differential entropy, because exponential entropy can be seen as a measure for the effective spread or extent of a distribution \citep{campbell1966exponential}. We will explain this intuition using several simple examples in the next section. 

\subsection{Intuitive interpretation of sensitivity based on exponential entropy}
\label{sec:3.3}
To explain the intuitive interpretation of exponential entropy based sensitivity indices, we first recall that the entropy of a random variable with a uniform distribution is $\ln(b-a)$, where $a, b$ are the bounds of the distribution. Taking the natural exponentiation of the entropy in this case results in $b-a$, which is the range of the uniform distribution. For a Gaussian distribution with variance $\sigma^2$, the exponential entropy is $\sqrt{2\pi e}\sigma^2$ which is proportional to the variance. 
\begin{figure}[!h]
	\centering
	 \subfloat[\centering Example 1] {{\includegraphics[width=5cm]{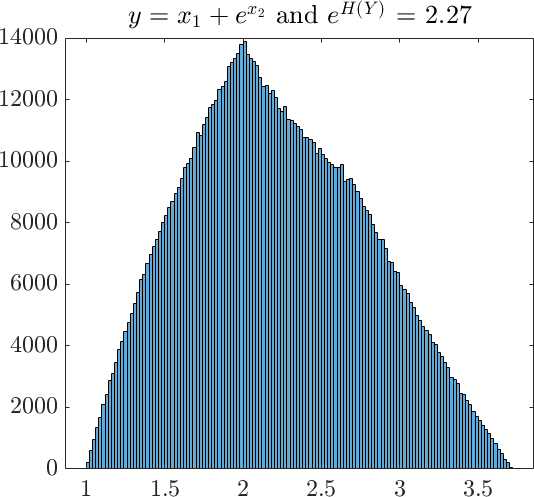} } \label{fig:eHYa}} 
	 \hfill
    \subfloat[\centering Example 2]{{\includegraphics[width=4.5cm]{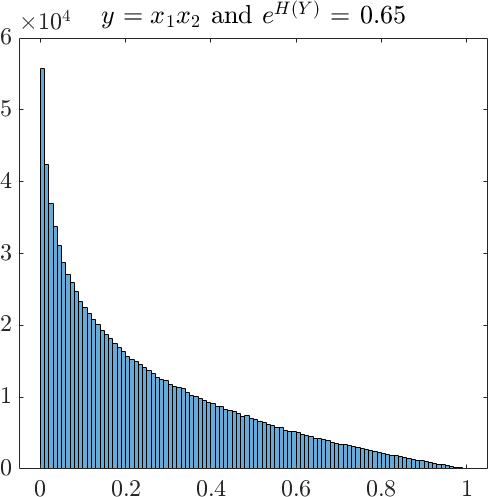} }\label{fig:eHYb}}
    	 \hfill
	  \subfloat[\centering Example 3]{{\includegraphics[width=5cm]{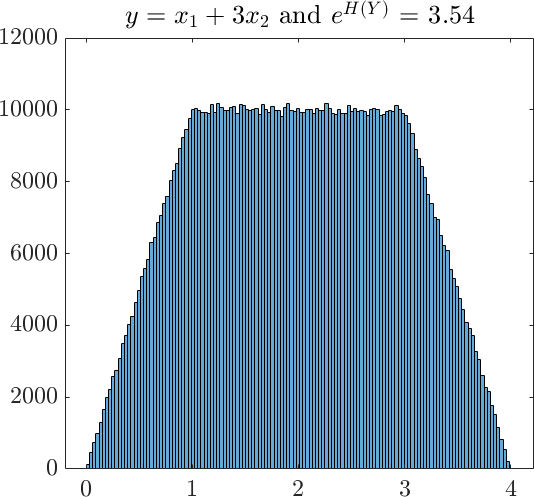} }\label{fig:eHYc}}
	\caption{Histograms of the output for monotonic examples 1 to 3 as described in Section 4 of the \textit{Main Text}. The exponential entropy of the outputs are (a) $e^{H(Y)} =2.26$; (b) $e^{H(Y)} =0.65$; (c) $e^{H(Y)} = 3.54$, which describes the effective extent/support of the underlying distribution. In comparison, the standard deviations of the output are 0.57, 0.22 and 0.91 for the three examples respectively.}
	\label{fig:examples_eHY}
\end{figure}

This becomes more evident if we plot the histogram of function output $Y$ from several simple functions in Figure \ref{fig:examples_eHY}. The histograms of the function outputs are based on $10^6$ Monte Carlo samples and from which we have calculated the exponential entropy $e^{H(Y)}$ for each case. We use examples 1 to 3 in Table \ref{tab:monotonic-ExList} from Section \ref{sec:4.1}, where more details of the entropy calculation can be found. 

When compared to the range of the distribution as represented by the histograms in Figure \ref{fig:examples_eHY}, the values of the exponential entropy provide intuitive indication of the spread of the distribution, in analogy to standard deviation or variance. For example, the average width of the top and bottom of the trapezium in Example 3 is approximately $(3+4)/2 = 3.5$, which is about the same as the exponential entropy of $3.543$. 

Therefore, the exponential entropy $e^H$ can be regarded as a measure of the extent, or effective support, of a distribution and this has been discussed in details in \cite{campbell1966exponential}. As the total effect entropy $H_{T_i}$ measures the remaining entropy in average if the true values of $\mathbf{X}_{\sim i}$ can be determined, $e^{H_{T_i}}$ can thus be regarded as the effective remaining range of the output distribution conditioning on that $\mathbf{X}_{\sim i}$ are known. The normalised indices eETSI $\kappa_{T_i}$ then measure the ratio of the effective range before and after $\mathbf{X}_{\sim i}$ are fixed, and larger $\kappa_{T_i}$ thus indicate a higher influence of $X_i$. 
\subsection{Link between exponential entropy and variance}
\label{sec:3.4}
In addition to its non-negativity and a more intuitive interpretation for GSA, exponential entropy is also closely linked to variance-based GSA indices and their corresponding bounds. To demonstrate this, we first note that the three different derivative-based sensitivity indices are closely related as:
\begin{equation} \label{eq:Deri.link}
   e^{l_i} \leq \mu_i \leq \sqrt{\nu_i}
 \end{equation}
where we recall $l_i =  \mathbb{E} \left[ \ln {\left| {\partial g (\mathbf{x})}/{\partial x_i} \right|} \right]$, $\mu_i =  \mathbb{E} \left[ {\left| {\partial g (\mathbf{x})}/{\partial x_i} \right|} \right]$ and $ \nu_i =  \mathbb{E} \left[ {\left| {\partial g (\mathbf{x})}/{\partial x_i} \right|^2} \right]$. 

It is evident that $\mu_i \leq \sqrt{\nu_i}$ based on Cauchy-Schwarz inequality. In addition, we have $e^{l_i} \leq \mu_i$ using Jensen's inequality as the exponential function is convex. So the inequality for the exponential entropy based eETSI from Eq \ref{eq:kappa} can be further associated with DGSM as: 
\begin{equation} \label{eq:kappa_DGSM}   
        \kappa^2_{T_i} = \frac{e^{2H_{T_i}}} {e^{2H(Y)}} \leq  \frac {e^{2H(X_i)}}{e^{2H(Y)}} \nu_i
\end{equation}
where we recall that $\nu_i$ are the derivative-based DGSM indices. 

Eq \ref{eq:kappa_DGSM} already looks remarkably similar to the variance-DGSM inequality given in Eq \ref{eq:STi_DGSM}. In fact, the squared exponential entropy $e^{2H(X)}$ of a random variable $X$, also called entropy power, is known from information theory to be bounded by the variance of $X$ with $e^{2H(X)} \leq 2\pi e {V(X)}$ \citep{cover1999elements}, where the equality is obtained if $X$ is a Gaussian random variable. Therefore, for independent Gaussian inputs with variance $\sigma_i^2$, we have $e^{2H(X_i)} = 2\pi e \sigma_i^2 $. The squared exponential entropy $e^{2H(X)}$ thus measures the `effective variance' as it is simply the variance of a Gaussian random variable with the same entropy \citep{costa1984similarity}.

In the special case where the function $y = g(\mathbf{x})$ is a linear function, the conditional distribution of $Y|\mathbf{X}_{\sim i}$ is also Gaussian, and it indicates that the conditional entropy power would also be proportional to the conditional variances of the output, i.e. $e^{2H_{T_i}}  = 2\pi e V_{T_i}$ (cf Example 5 in Section \ref{sec:4.1}). Therefore, it is clear that with independent Gaussian inputs and a linear function, we have from Eq \ref{eq:kappa_DGSM} that:
\begin{equation} \label{eq:EntropyVarianceEqui}   
      2\pi e V_{T_i} \leq  2\pi e \sigma_i^2 \nu_i  \quad \rightarrow \quad V_{T_i} \leq  \sigma_i^2 \nu_i 
\end{equation}
by substituting entropy power $e^{2H_{T_i}}$ and $e^{2H(X_i)}$ by their variance counter parts. Eq \ref{eq:EntropyVarianceEqui} thus indicates that in this special case the $\nu_i$-based entropy upper bound from Eq \ref{eq:kappa_DGSM} is equivalent to the $\nu_i$-based variance upper bound relationship given in Eq \ref{eq:STi_DGSM}. 
\section{Numerical illustrations}
\label{sec:4}
In this section, we first examine the special equality case with monotonic functions, and then provide assessment of the total effect entropy inequality with general nonlinear functions. A physical example will be considered in Section \ref{sec:5}. Note that although the inequality in Eq \ref{eq:proposition} makes no assumptions of independence, for simplicity the input variables are assumed to be independent in these numerical examples. The upper bound (UB) can also be adjusted to work with groups of input
variables, and this is illustrated briefly in Appendix \ref{appendix:e}. 

\subsection{Analytical verifications for equality cases with monotonic functions}
\label{sec:4.1}
In this section, we verify analytically that the total effect entropy inequality from Eq \ref{eq:proposition} is tight for a monotonic function. A function $g : \mathbb{R}^d \rightarrow \mathbb{R}$ is monotonically increasing if $x_i \leq x'{_i} $ for all $i$ implies $g(\mathbf{x}) \leq g(\mathbf{x'}) $. Based on the analytical results, we present numerical experiments to investigate the required sample size for the tight bound. 
\begin{figure}[!h]
	\centering
	\includegraphics[width=14 cm]{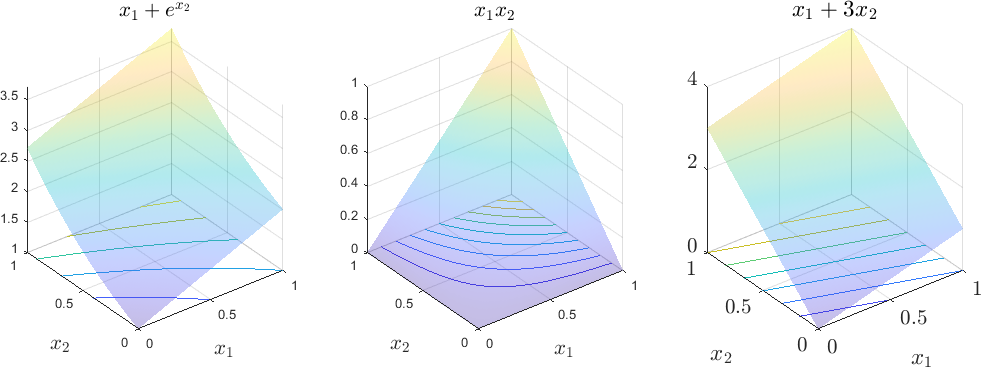}
	\caption{Surface plots for the monotonic functions in examples 1 - 3}
	\label{fig:Monotonic}
\end{figure}

Five monotonic functions are considered. These are listed in Table \ref{tab:monotonic-ExList} and Figure \ref{fig:Monotonic} shows plots of examples 1 - 3 which are non-decreasing in the domain of interest.  All input variables are assumed to have the same uniform distribution for examples 1 - 4, i.e. $x_i \sim \mathbb{U}(0,1)$, while Gaussian distributions are used for example 5. For verification purposes, all the examples in this section are chosen to have tractable expressions for both the integral of derivatives and the conditional entropies. 
\begingroup
\def\arraystretch{0.5}
\setlength{\tabcolsep}{4pt}
\begin{table}[!ht]
\caption{Analytical results for five different monotonic functions, where derivations are given in Appendix \ref{appendix:b}. Note that $ H_{T_i} = \mathbb{E}[H(Y|\mathbf{X}_{ \sim i})] $ is the total effect entropy and $ l_i =  \mathbb{E} \left[ \ln {\left| {\partial g (\mathbf{x})}/{\partial x_i} \right|} \right]$}
\smallskip
    \centering
    \small
    \makebox[\textwidth][c] {
    \begin{tabular}{llcccc}
     \cline{3-6}
  & & \multicolumn{2}{|c|}{$X_1$} &
  \multicolumn{2}{c|}{$X_2$} \\ 
  \hline
\multicolumn{2}{|l|}{Examples} &
  \multicolumn{1}{l|}{$H_{T_1}$} &
  \multicolumn{1}{l|}{$H(X_1) + l_1$} &
  \multicolumn{1}{l|}{$H_{T_2}$} &
  \multicolumn{1}{l|}{$H(X_2) + l_2$} \\ \hline
\multicolumn{1}{|l|}{Ex-1} &
  \multicolumn{1}{l|}{$y=x_1 + e^{x_2}$} &
  \multicolumn{1}{c|}{0} &
  \multicolumn{1}{c|}{0} &
  \multicolumn{1}{c|}{1/2} &
  \multicolumn{1}{c|}{1/2} \\ \hline
\multicolumn{1}{|l|}{Ex-2} &
  \multicolumn{1}{l|}{$y=x_1 x_2$} &
  \multicolumn{1}{c|}{-1} &
  \multicolumn{1}{c|}{-1} &
  \multicolumn{1}{c|}{-1} &
  \multicolumn{1}{c|}{-1} \\ \hline
\multicolumn{1}{|l|}{Ex-3} &
  \multicolumn{1}{l|}{$y=x_1 + 3x_2$} &
  \multicolumn{1}{c|}{0} &
  \multicolumn{1}{c|}{0} &
  \multicolumn{1}{c|}{$\ln{3}$} &
  \multicolumn{1}{c|}{$\ln{3}$} \\ \hline
\multicolumn{1}{|l|}{Ex-4} &
  \multicolumn{1}{l|}{$y= x_1x_2^r$} &
  \multicolumn{1}{c|}{$-r$} &
  \multicolumn{1}{c|}{$-r$} &
  \multicolumn{1}{c|}{$\ln{r} - r $} &
  \multicolumn{1}{c|}{$\ln{r} - r$} \\ \hline
\multicolumn{1}{|l|}{Ex-5} &
  \multicolumn{1}{l|}{$y= \sum_{i=1}^d a_i x_i$} &
  \multicolumn{1}{c|}{$\ln{|a_i|}$} &
  \multicolumn{1}{c|}{$\ln{|a_i|}$} &
  \multicolumn{2}{l|}{for all $i$} \\ \hline
      \end{tabular}
    }
    	\label{tab:monotonic-ExList}
\end{table}
\endgroup

From table \ref{tab:monotonic-ExList} we can see that $H_{T_i} = H(X_i) + l_i$ for the monotonic examples considered. These analytical results not only verify that the inequality from Eq \ref{eq:proposition} is tight for monotonic functions with independent inputs, but also provide benchmark for convergence test of numerical estimations. 

For examples 1 - 3, the total effect entropies $H_{T_i}$ are also numerically estimated using the method given in Appendix \ref{appendix:a}. We estimate $H_{T_i}$ with Monte Carlo sampling, with number of samples ranging from $10^3$ to $10^8$ as shown in Figure \ref{fig:HTi_mono}.  Also shown are the standard deviations (std) from 10 repeated estimations and the analytical values from Table \ref{tab:monotonic-ExList}. 

It can be seen from Figure \ref{fig:HTi_mono} that the estimation of $H_{T_i}$ converges to the exact values with increasing number of samples, and the relative error with $10^8$ samples is less than 1\% for all functions. However, large number of samples is required. 

In comparison, a smaller number of samples is sufficient for the estimation of derivative-based sensitivity measures. Numerical error of $l_i$ with 100 samples is less than 1\% for the low dimensional cases considered. Note that the number of samples required is case dependent and a typical cost of derivative-based indices is in the order $10^3$ or $10^4$ \citep{kucherenko2009monte}. For the estimation of $l_i$, the finite difference method for approximating the partial derivative is used with a fixed increment step of $10^{-5}$ following \cite{kucherenko2009monte} for DGSM estimation. Note that the numerical results for $l_i$ are not shown in Figure \ref{fig:HTi_mono} as it is indistinguishable from the analytical values in comparison to $H_{T_i}$. 
\begin{figure}[!t]
    \captionsetup[subfloat]{labelformat=empty}
	\centering
	 \subfloat[ ] {{\includegraphics[width=5.5cm]{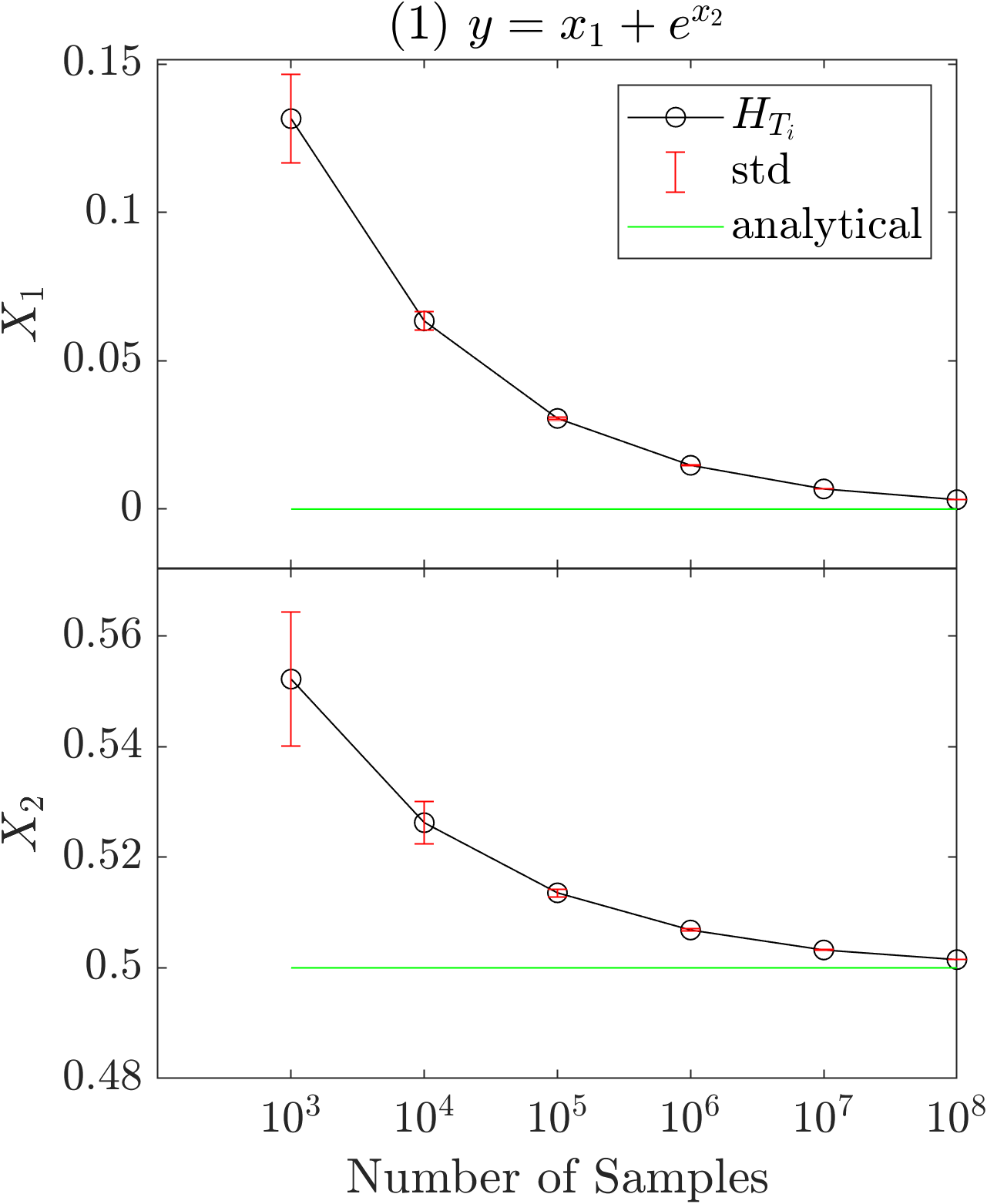} } \label{fig:Ex-1}} 
		 \subfloat[ ] {{\includegraphics[width=5cm]{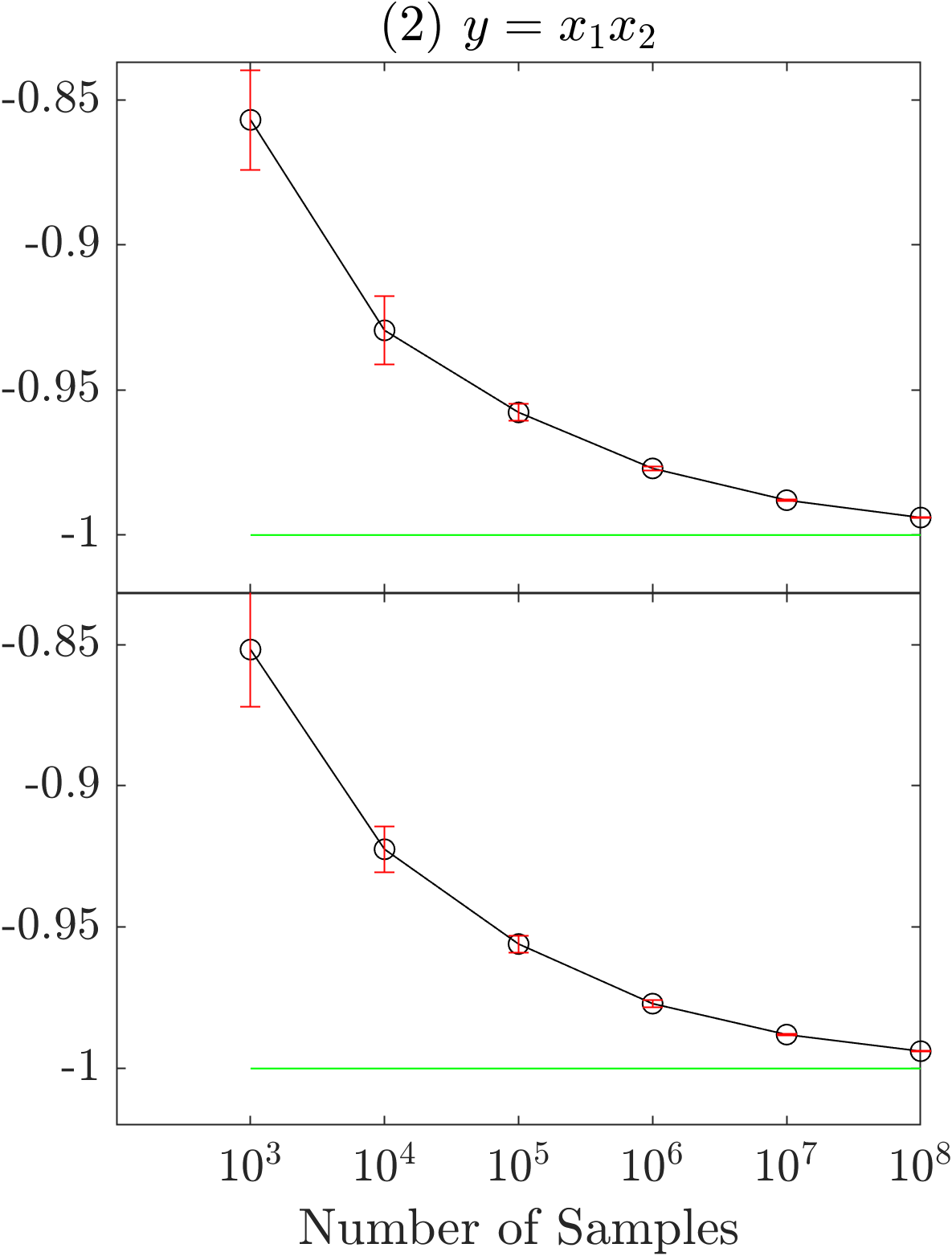} } \label{fig:Ex-2}} 
		 	 \subfloat[] {{\includegraphics[width=5cm]{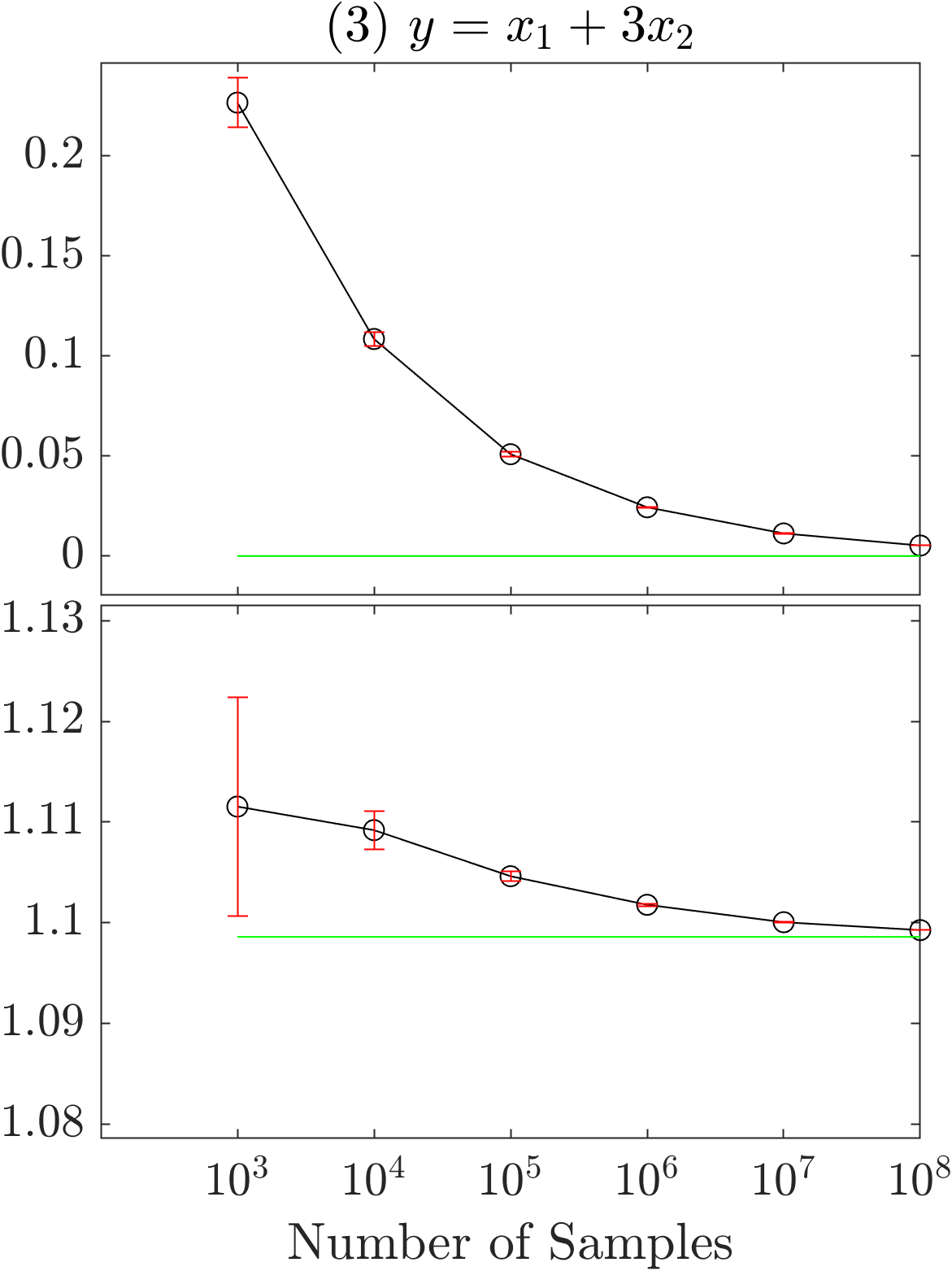} } \label{fig:Ex-3}} 
		\vspace*{-8mm}
	 	\caption{ Convergence of the numerical estimated total effect entropy for monotonic examples 1 - 3.}
	\label{fig:HTi_mono}
\end{figure}

\subsection{Illustrations with Ishigami function and G-function}
\label{sec:4.2}
In this section, we use Ishigami function (Figure \ref{fig:Ishigami}) and G-function (Figure \ref{fig:Gfunction}) for illustrations with general nonlinear functions. Both functions are commonly used test functions for global sensitivity analysis, due to the presence of strong interactions. 
\begin{figure}[!t]
	\centering
	 \subfloat[\centering Ishigami function] {{\includegraphics[width=10cm]{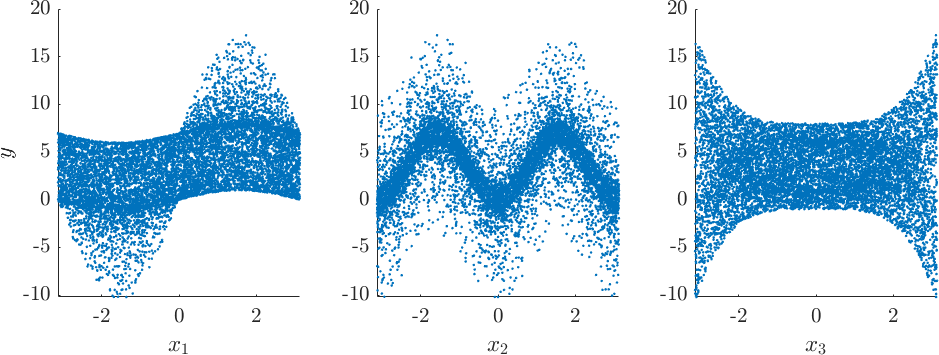} } \label{fig:Ishigami}} 
	 \quad
	 \subfloat[\centering G-function] {{\includegraphics[width=4cm]{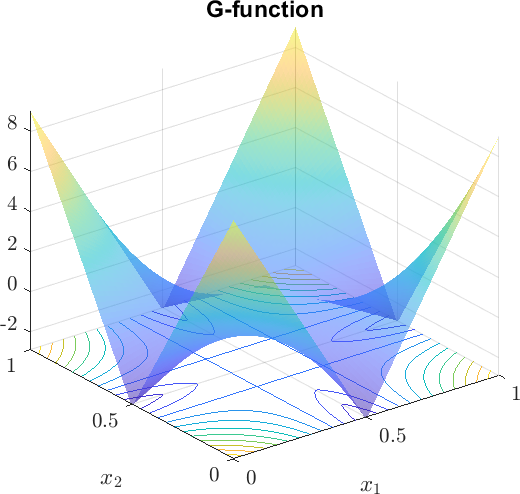} } \label{fig:Gfunction}} 
	 	\caption{ Example plot for the general functions considered. a) Scatter plot for the Ishigami function, $y = \sin(x_1) + 7 \sin^{2} (x_2) + 0.1 x^{4}_3\sin(x_1)$, $x_i \sim \mathbb{U} (-\pi, \pi)$ for $i = 1, 2, 3$. ; b) Surface plots for the G-function (2 variable plot), $y = \prod_{i=1}^{3} (|4x_i - 2| + a_i )/(1+a_i)$, $x_i \sim \mathbb{U} (0, 1)$ for $i = 1, 2, 3$. In this case, $a_i = (i-2)/2$, for  $i = 1, 2, 3$. A lower value of $a_i$ indicates a higher importance of the input variable $x_i$, i.e., $x_1$ is the most important, while $x_3$ is the least important in this case}
	\label{fig:generalfunction}
\end{figure}

\subsubsection{Assessment of total effect entropy inequality}
\label{sec:4.2.1}
These two functions, each with three input variables, are used in this section to demonstrate the inequality relationship derived in Eq \ref{eq:proposition}. The conditional entropies are estimated numerically using Monte Carlo sampling as described in Appendix \ref{appendix:a}. Different numbers of samples are used, ranging from $1e6$ to $1e8$. For each estimation, the computation is repeated 20 times and both the mean value and the standard deviation (std) are reported in Table \ref{tab:ishigami} and \ref{tab:G}. For the estimation of derivative-based $l_i$, we use the finite-difference based approach with 1000 samples. As the analytical expressions of the partial derivatives are readily available in this case, the derivative-based results are further verified with direct integration using Matlab's inbuilt numerical integrator "integral" with default tolerance setting. 

The sensitivity results for the Ishigami function are listed in Table \ref{tab:ishigami}, where it is clear that inequality from Eq \ref{eq:proposition} is satisfied. It is clear the standard deviation is small. However, the convergence of conditional entropy estimation is slow as large number of samples is needed, as we saw for monotonic functions in Figure \ref{fig:HTi_mono}. 
\begingroup
\def\arraystretch{0.3}
\begin{table}[!h]
	\caption{Total effect entropy results for the Ishigami function, which are obtained for different number of samples. This is repeated for 20 times and the mean and standard deviation (std) are given. The results from $10^8$ samples are compared to $H(X_i) + l_i$, for which the inequality in Eq \ref{eq:proposition} is clearly satisfied.}
	\smallskip
    \centering
    \makebox[\textwidth][c] { 
    \begin{tabular}{ccccccc}
    \toprule
            ~& \multicolumn{6}{c} {Ishigami function } \\
        ~& \multicolumn{6}{c} {$y = \sin(x_1) + 7\sin^{2} (x_2) + 0.1 x^{4}_3\sin(x_1)$ } \\
        \cmidrule(lr){2-7} 
           \multirow{2}{4cm}{Number of Samples}   & \multicolumn{2} {c} {$H_{T_1}$} & \multicolumn{2} {c} { $H_{T_2}$} & \multicolumn{2} {c} {$H_{T_3}$} \\ 
           & mean & std & mean & std & mean & std \\
        	\cmidrule(lr){2-3} \cmidrule(lr){4-5} \cmidrule(lr){6-7}
1.00E+06 & 1.3902 & 0.0007 & 1.7614 & 0.0006 & 0.9701 & 0.0013 \\
1.00E+07 & 1.2978 & 0.0003 & 1.7023 & 0.0001 & 0.7693 & 0.0004 \\
1.00E+08 & 1.2335 & 0.0001 & 1.6609 & 0.0001 & 0.6066 & 0.0002 \\
      \\
          &  $X_1$ & $X_2$ & $X_3$ &  & \\
           \cmidrule(lr){2-4} 
        $H_{T_i} = \mathbb{E} [H(Y|\mathbf{X}_{\sim i})]$  &  1.2335 & 1.6609 & 0.6066     & &   \\
           $H(X_i) + l_i$ & 1.9024 & 3.0906 & 0.6626 & &\\
        \bottomrule
    \end{tabular}
    }
    	\label{tab:ishigami}
\end{table}
\endgroup

The sensitivity results for the G-function, in the same format as Table \ref{tab:ishigami}, are reported in Table \ref{tab:G}. It is clear that the inequality relationship in Eq \ref{eq:proposition} is also satisfied. The results in Table \ref{tab:G} also highlights the issue that the differential entropy based results can be negative. The relative amplitudes of $H_{T_i}$ can still indicate the relative importance of the input variables for the output entropy, but the negative amplitudes are undesirable for sensitivity analysis. 
\begingroup
\def\arraystretch{0.3}
\begin{table}[!h]
	\caption{Sensitivity results for the G-function, where the inequality in Eq \ref{eq:proposition} is clearly satisfied. Same key as Table \ref{tab:ishigami}}
	\smallskip
    \centering
    \makebox[\textwidth][c] { 
    \begin{tabular}{ccccccc}
    \toprule
        ~& \multicolumn{6}{c} { G-function  \quad $\displaystyle y = \prod_{i=1}^{3} \frac{|4x_i - 2| + a_i }{1+a_i}$ with $a_i = (i-2)/2$} \\
        \cmidrule(lr){2-7} 
            \multirow{2}{4cm}{Number of Samples}   & \multicolumn{2} {c} {$H_{T_1}$} & \multicolumn{2} {c} { $H_{T_2}$} & \multicolumn{2} {c} {$H_{T_3}$} \\ 
           & mean & std & mean & std & mean & std \\
        	\cmidrule(lr){2-3} \cmidrule(lr){4-5} \cmidrule(lr){6-7}
    1.00E+06 &  0.3477 & 0.0009 & -0.1376 & 0.0013 & -0.3988 & 0.0015 \\
    1.00E+07 & 0.3398 & 0.0006 & -0.1737 & 0.0005 & -0.4482 & 0.0006 \\
    1.00E+08 & 0.3378 & 0.0003 & -0.1917 & 0.0002 & -0.4738 & 0.0002 \\
       \\
          &  $X_1$ & $X_2$ & $X_3$ &  & \\
           \cmidrule(lr){2-4} 
        $H_{T_i} = \mathbb{E} [H(Y|\mathbf{X}_{\sim i})]$   &  0.3378 & -0.1917 & -0.4738     & &   \\
        $H(X_i) + l_i$ & 1.3863 & 0.9808 & 0.6931 & &\\
        \bottomrule
    \end{tabular}
    }
    	\label{tab:G}
\end{table}
\endgroup
\subsubsection{Ranking with eETSI}
\label{sec:4.2.2}
In this section, we take the exponentiation of the total effect entropy, and discuss the exponential entropy based total sensitivity index (eETSI) $\kappa_{T_i}$ and its $l_i$-based upper bound (UB). $\kappa_{T_i}$ are obtained using the mean values of the total effect entropy $H_{T_i}$ from Table \ref{tab:ishigami} and \ref{tab:G} with $10^8$ samples, and the corresponding $e^{H(Y)}$ for the output. 

The results of the sensitivity indices are shown in Figure \ref{fig:Rank_G_Ishigami} for both Ishigami function and G-function. It can be seen that $l_i$-based upper bounds provide the same variable ranking as $\kappa_{T_i}$, although the bound can be loose for these non-linear functions. 
\begin{figure}[!t]
	\centering
	 \subfloat[\centering Ishigami function] {{\includegraphics[width=7cm]{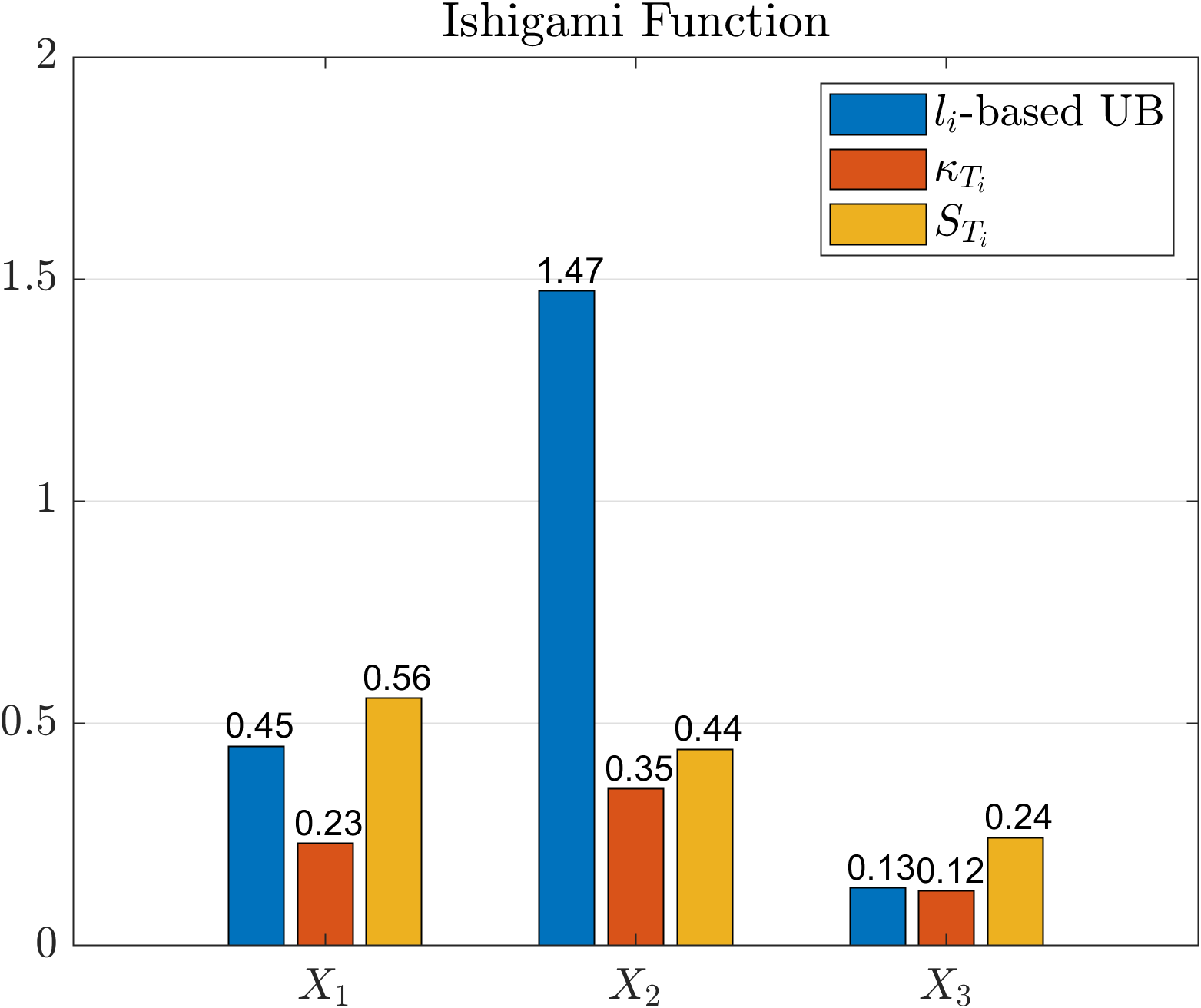} } \label{fig:Ishigami rank}} 
	 \quad
	 \subfloat[\centering G-function] {{\includegraphics[width=7cm]{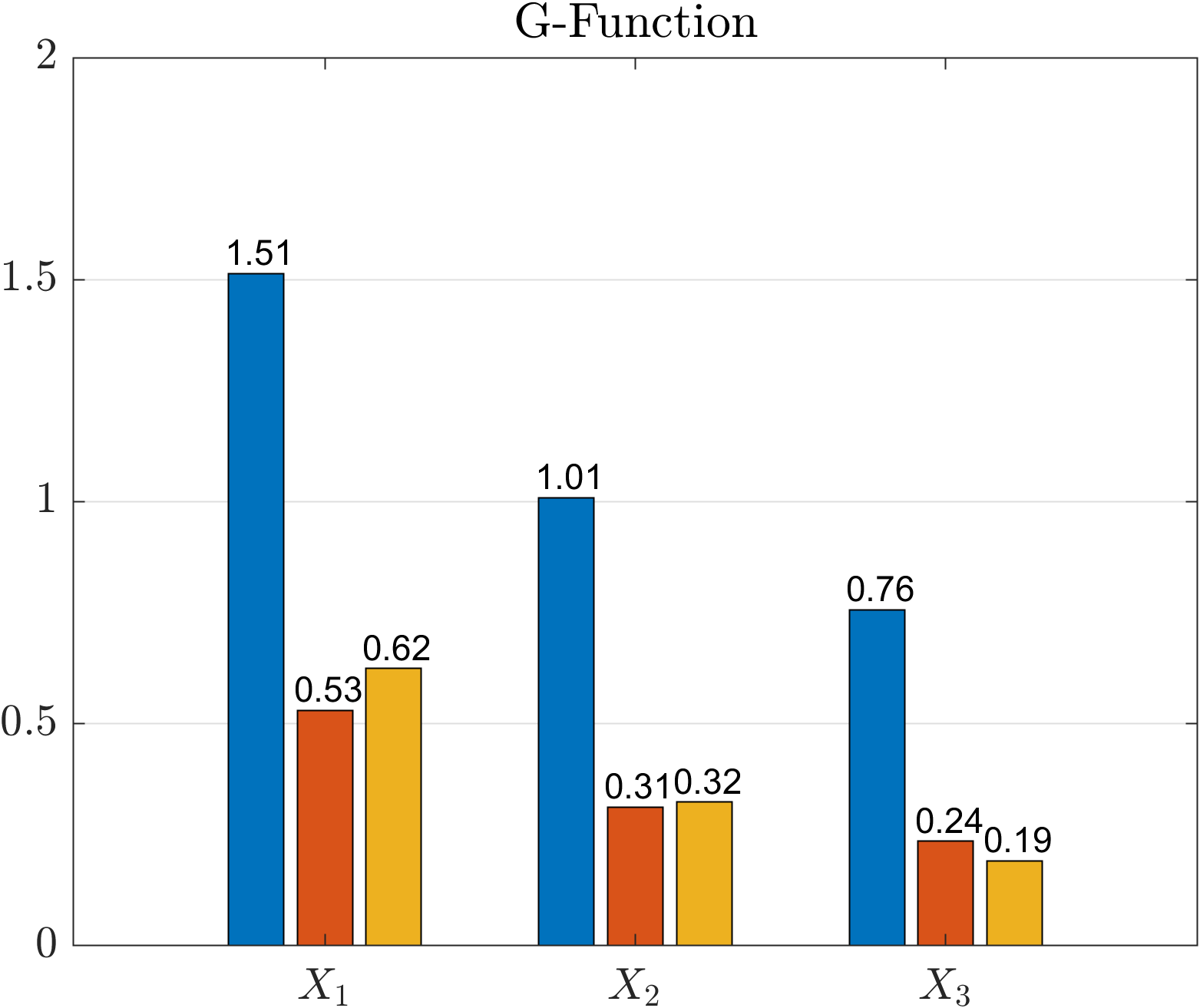} } \label{fig:G rank}} 
	 	\caption{Sensitivity indices for Ishigami function and G-function}
	\label{fig:Rank_G_Ishigami}
\end{figure}

In addition, we have also calculated the variance based $S_{T_i}$, using $10^5$ samples with 20 repetitions, where the mean values are shown in Figure \ref{fig:Rank_G_Ishigami}. It can be seen that the results from $\kappa_i$ are generally consistent with $S_{T_i}$, especially for G-function where the sensitivity ranking are similar both qualitatively and quantitatively. 

For the Ishigami function, both indices have successfully identified the contribution of $X_3$ which is the lowest. However, the relative importance of $X_1$ and $X_2$ are opposite from $\kappa_i$ and $S_{T_i}$. We explain in Appendix \ref{appendix:d} that the interaction between $x_1$ and $x_3$ is more influential for variance due to the squaring effect, as compared to the entropy operation which takes logarithm of the interaction. This difference increases towards the boundary as the interaction between $x_1$ and $x_3$ gets stronger towards $-\pi$ and $\pi$. And this helps to explain why $x_1$ is the most influential variable for the variance-based $S_{T_i}$. This example highlights that, despite many similarities, entropy and variance are fundamentally different, for example, the variable interactions are processed differently between them. Note that the difference between variance-based and entropy-based ranking for Ishigami function was also noted in \cite{auder2008entropy}. 
\subsection{Numerical assessment using a randomised meta-function}
\label{sec:4.3}
We have seen from the numerical assessments that although $l_i$-based upper bounds are tight for monotonic functions, they can be loose for non-linear functions. Nevertheless, the $l_i$-based upper bounds provide same variable rankings as the eETSI $\kappa_{T_i}$, for both Ishigami function and G-function. To further investigate the screening capability of the proposed derivative-based upper bound for entropy sensitivity indices, we adopt a randomised metafunction, i.e. a function generating function, as our test function in this section. 

We follow \cite{becker2020metafunctions} and generate the metafunction as a combination of random selected basis functions that commonly occur in physical models. Nine different basis functions are used, ranging from continuous linear and exponential functions to discontinuous step functions, as listed in Table \ref{tab:MetaF_basis}. The basis functions used are the same as \cite{becker2020metafunctions}. These functions are drawn randomly 1000 times to form the meta function. The histograms of the selected basis functions and the corresponding coefficients are shown in Figure \ref{fig:MetaDist_f} and Figure \ref{fig:MetaDist_coef} respectively. 
\begingroup
\def\arraystretch{0.5}
\setlength{\tabcolsep}{6pt}
\begin{table}[ht]
	\caption{List of basis functions for the meta function}
    \smallskip
    \centering
\makebox[\textwidth][c] {
	\begin{tabular}{ccl} 	
		\toprule
1 & Linear        &     $f^1(x) = x $     \\
2 & Quadratic     &     $f^2(x) = x^2 $      \\
3 & Cubic         &       $f^3(x) = x^3 $    \\
4 & Exponential   &     $f^4(x) = (e^x - 1) / (e - 1)$      \\
5 & Periodic      &      $f^5(x) = 0.5 \sin(2 \pi x) + 0.5$     \\
6 & Discontinuous &    $f^6(x) = 1$ if $x \geq 0.5$ and $0$ otherwise        \\
7 & Dummy         &     $f^7(x) = 0 $      \\
8 & Non-monotonic &   $f^8(x) = 4(x-0.5)^2 $        \\
9 & Inverse       &         $f^9(x) = (10 -1/1.1)^{-1} (x+0.1)^{-1} - 0.1 $ \\
		\bottomrule
	\end{tabular}
	}
	\label{tab:MetaF_basis}
\end{table}
\endgroup
\begin{figure}[!h]
	\centering
	\includegraphics[width=14cm]{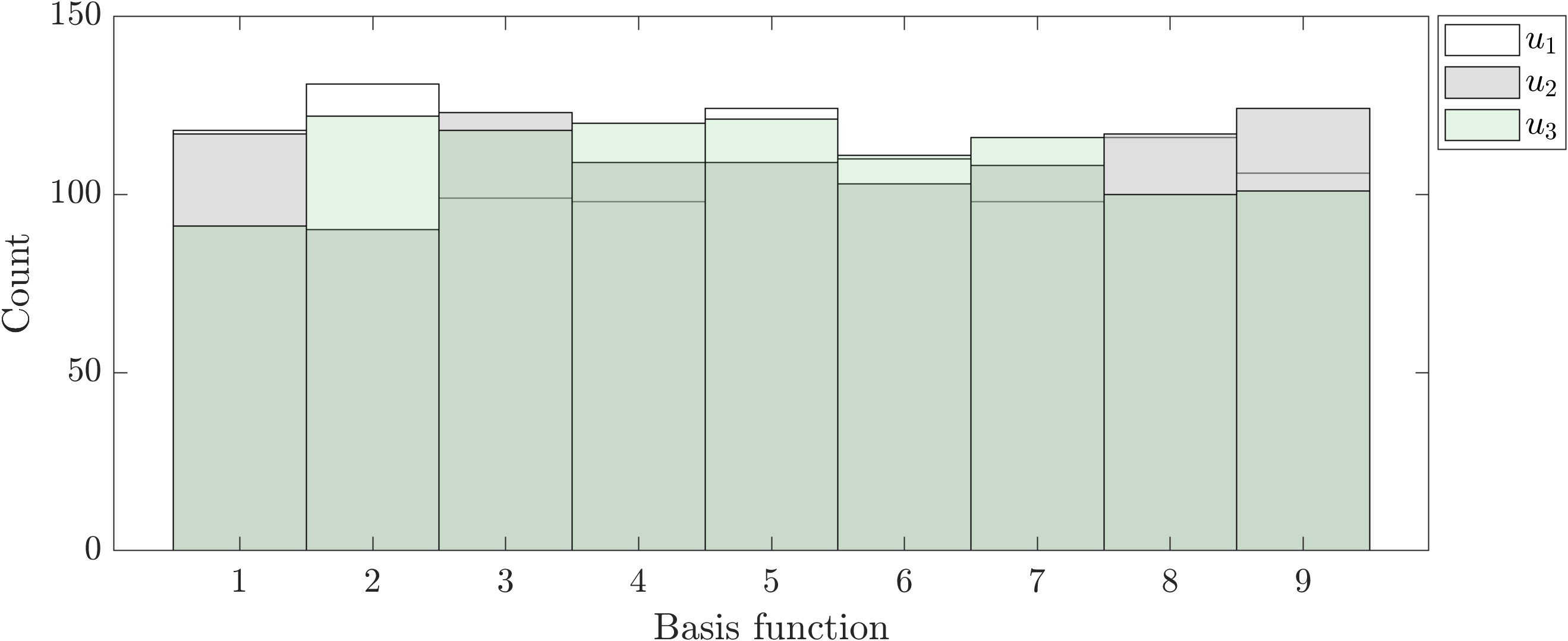}
	\caption{Distribution of basis functions.}
	\label{fig:MetaDist_f}
\end{figure}
\begin{figure}[!h]
	\centering
	\includegraphics[width=14cm]{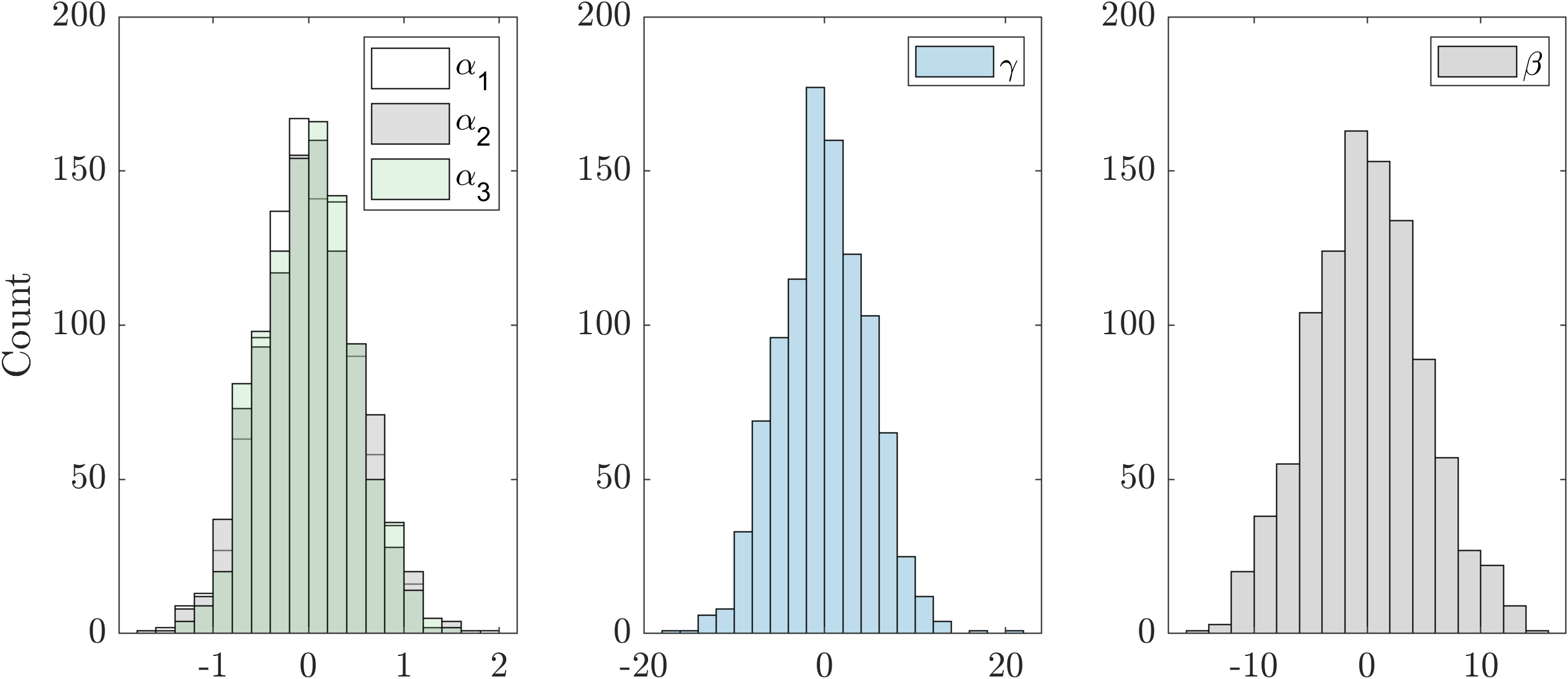}
	\caption{Distribution of coefficients.}
	\label{fig:MetaDist_coef}
\end{figure}

As it is difficult to estimate the conditional entropy for functions of high dimensions, we consider a 3-dimensional random function that is constructed as follows: 
\begin{equation} \label{eq: metaF}   
      y  =  \sum_{i=1}^3 \alpha_i f^{u_i}(x_i) + \beta \prod_{j=1}^2 f^{u_{v_j}} (x_{v_j}) + \gamma \prod_{k=1}^3 f^{u_{w_k}} (x_{w_k}) 
\end{equation}
where $u_i, i = 1,2,3$ is sampled from a discrete uniform distribution over the integers ${1,2, \dots, 9}$. $\alpha_i, \beta, \gamma$ are weighting coefficients that are drawn from a mixture of two standard normal distributions, with variance of 0.5 and 5 respectively. We then get the indices $v_j, j=1,2$ and $w_k, k=1,2,3$ by sampling uniformly from the integer set  ${1,2,3}$, representing the two-factor and three-factor interaction respectively. Note that the same basis function can be drawn multiple times. For example, if $u_i = 1,5,8$, it is possible that $u_{w_k} = 5,5,5$ as they are drawn independently.  

The total effect entropies $H_{T_i}$ are estimated with $10^8$ MC samples for each of these 1000 functions and the results are shown in Figure \ref{fig:MetaF_corr}, in relation to derivative-based $H(X_i) + l_i$. As before, $l_i$ are estimated from 100 samples with a fixed increment step of $10^{-5}$ using finite difference. All three input variables are uniformly distributed, i.e. $x_i \sim \mathbb{U} [0,1], i=1,2,3$. 

The scatter plot from Figure \ref{fig:MetaF_corr} shows that the log-derivative based $l_i$ is well correlated to the total effect entropy $H_{T_i}$, although the upper bound is not satisfied for all functions considered (scatters on the right hand side of the diagonal line indicate $H(X_i) +l_i \leq H_{T_i}$). This is partly due to the slow convergence of $H_{T_i}$ as seen from numerical studies in previous sections, also because the inclusion of discontinuous and dummy basis functions for which the derivative-based upper bounds are not expected. 
\begin{figure}[!h]
	\centering
	 \subfloat[\centering Scatter plot for total effect entropy] {{\includegraphics[width=7cm]{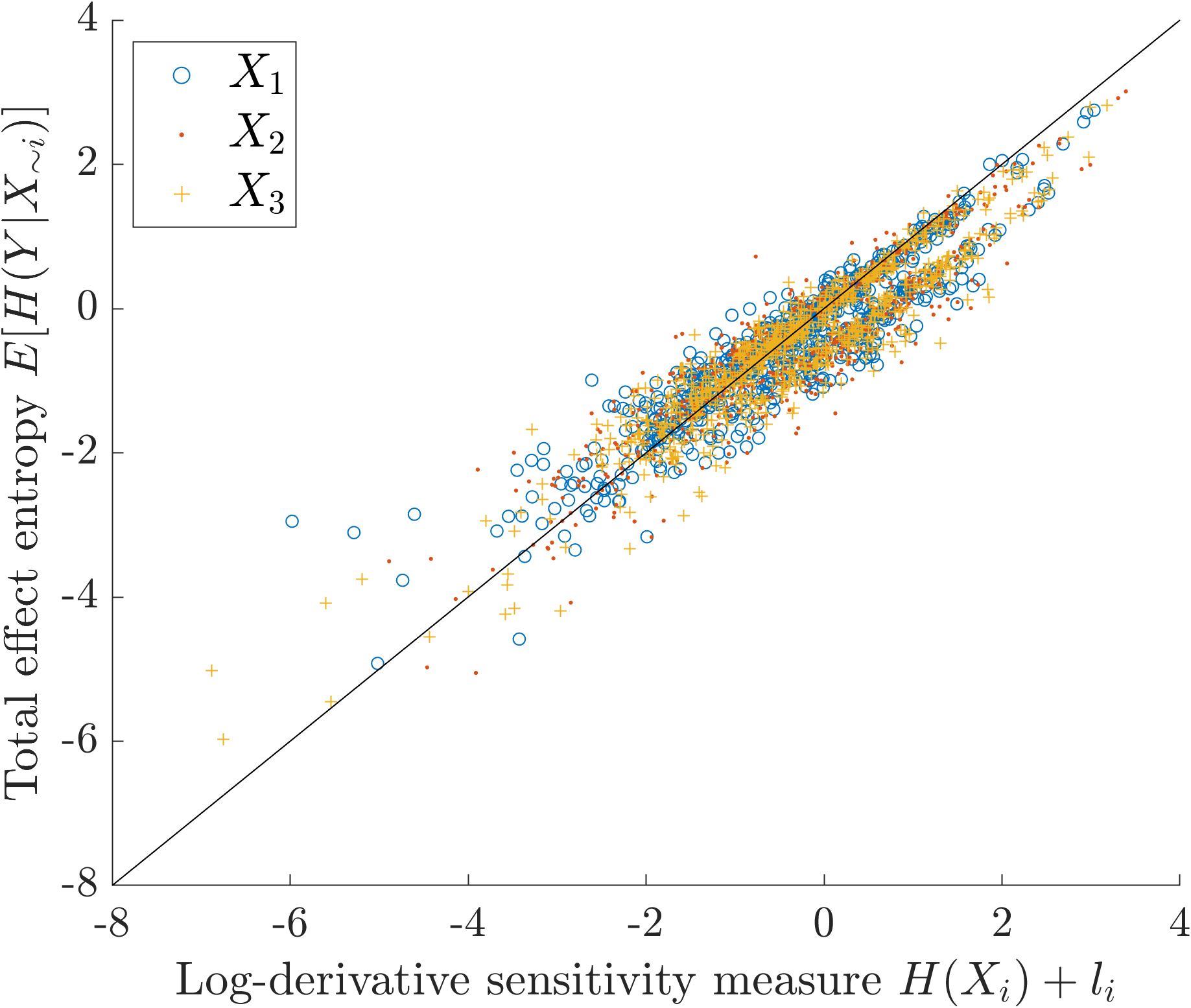} } \label{fig:MetaF_corr}} 
	 \quad
	 \subfloat[\centering Distribution of SA indices $\kappa_{T_i}$ and $l_i$-based upper bound (UB)] {{\includegraphics[width=7cm]{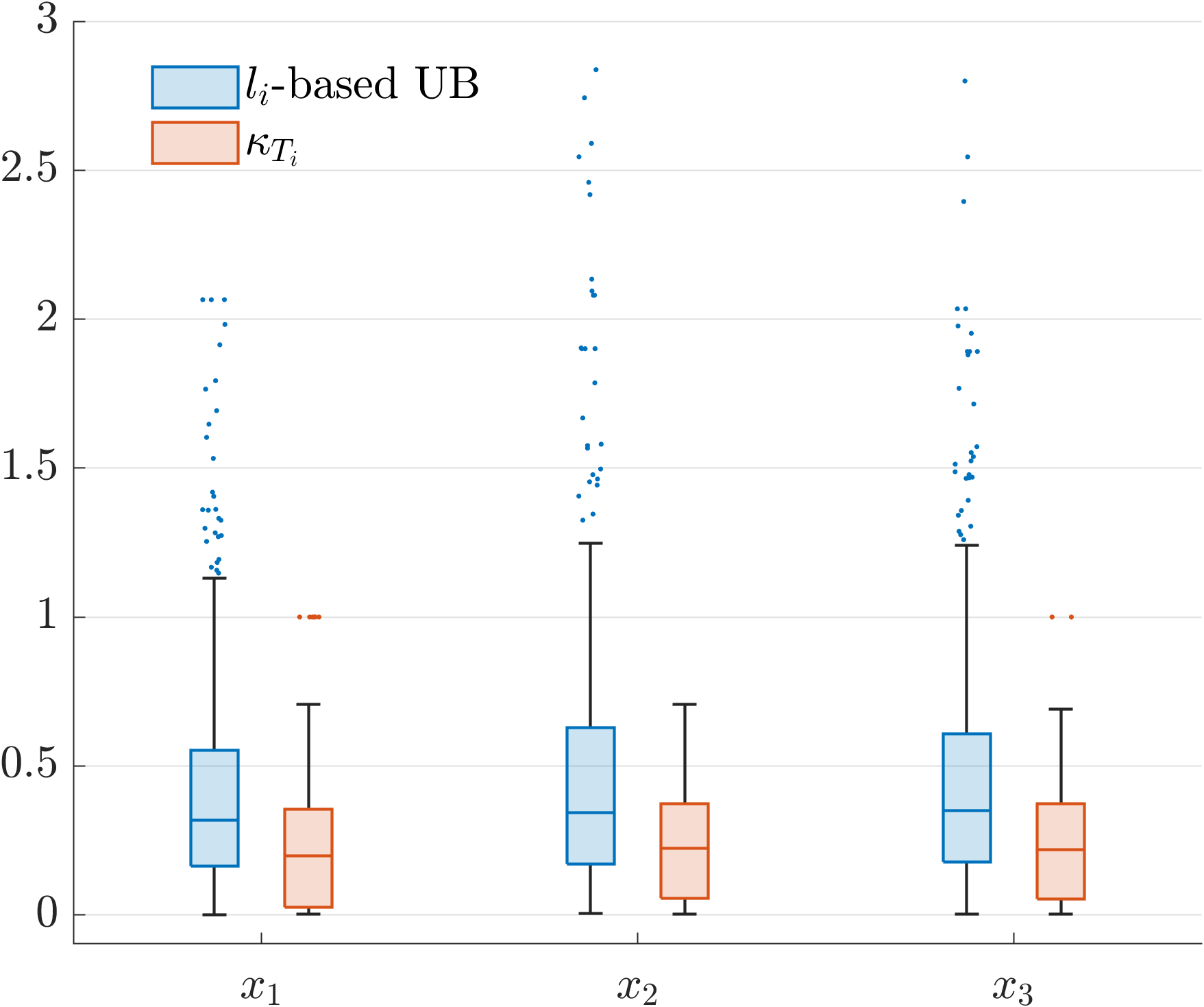} } \label{fig:MetaF_boxplot}} 
	 \caption{Statistical results for sensitivity analysis from 1000 functions drawn randomly from the meta-function}
	\label{fig:MetaF_corr_boxplot}
\end{figure}

A comparison of distributions for eETSI $\kappa_{T_i}$ indices and $l_i$-based upper bound is shown in Figure \ref{fig:MetaF_boxplot}. It can be seen that, excluding the outliers, $l_i$-based upper bounds are reasonably tight for entropy-based indices $\kappa_{T_i}$. This is consistent with the compact scatter we see in Figure \ref{fig:MetaF_corr}. 

The relative tightness of the upper bound suggests its role as a proxy for entropy-based sensitivity analysis. In Figure \ref{fig:MetaF_ranking}, we compare $l_i$-based upper bounds and $\kappa_{T_i}$ in terms of variable ranking from the random functions. The results are categorised as 'All' (same ranking for all three variables), 'max' (most influential variable) and 'min' (least influential variable). 

Out of the 1000 functions, $l_i$-based upper bounds give the exactly same ranking of all variables as $\kappa_{T_i}$ for about three quarters of the functions. The agreements for the the most influential or least influential variables are well above 80\%. Also shown in Figure \ref{fig:MetaF_ranking} are the results from the DGSM $\nu_i$-based UB for entropy-based sensitivity indices, where it can be seen that $l_i$-based UB, which is a tighter bound than $\nu_i$-based UB, agrees better with $\kappa_{T_i}$ for the random functions considered. 
\begin{figure}[!h]
	\centering
	\includegraphics[width=14 cm]{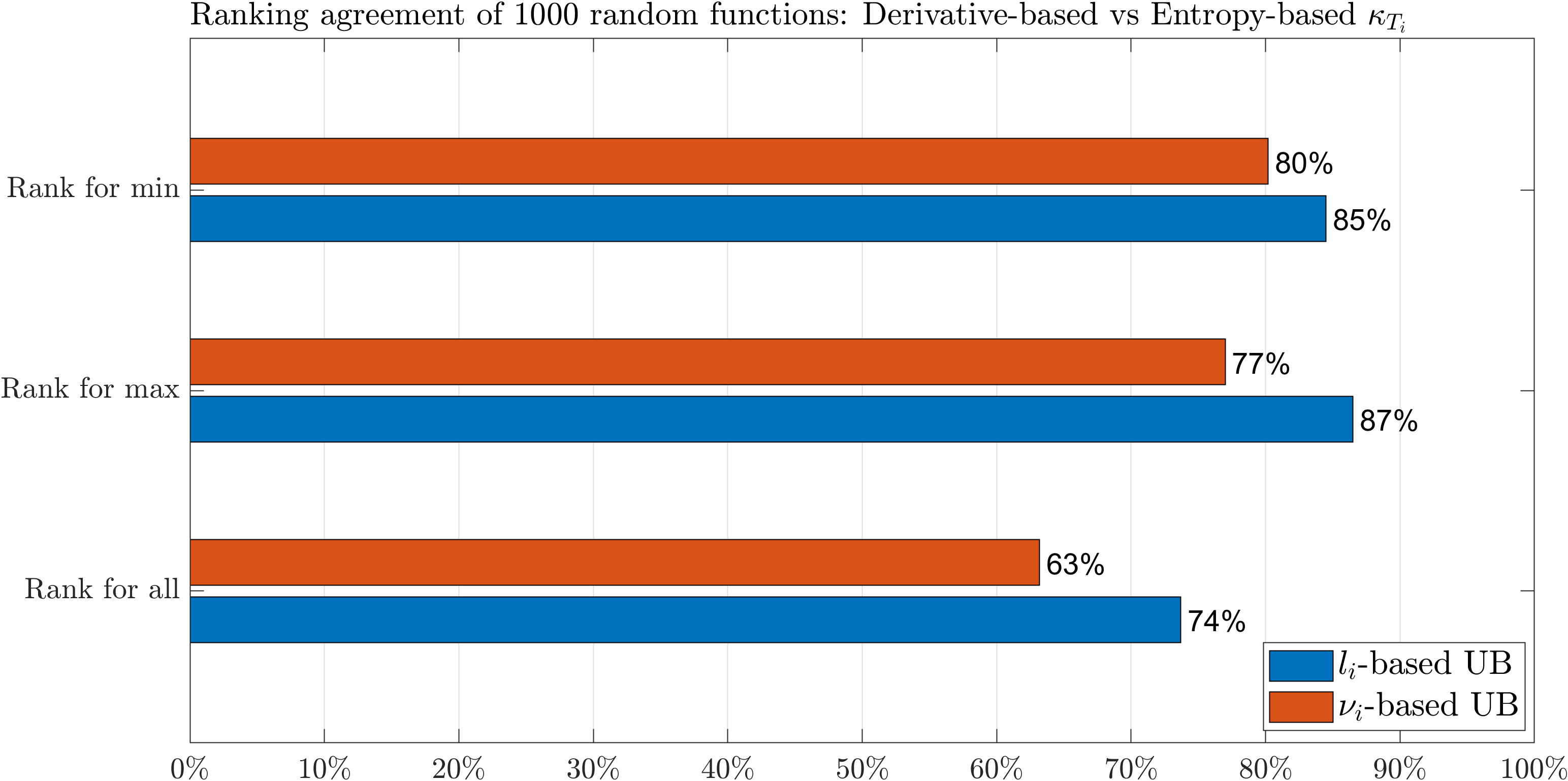}
	\caption{Ranking agreement between derivative-based upper bound (UB) and entropy-based $\kappa_{T_i}$. $l_i$-based UB is $e^{H(X_i)} l_i/e^H(Y)$ and $\nu_i$-based UB is $e^{H(2X_i)} \nu_i/e^H(2Y)$}
	\label{fig:MetaF_ranking}
\end{figure}
\section{A flood model case study}
\label{sec:5}
The numerical examples in the previous section have demonstrated that the log-derivative $l_i$ based upper bound can be potentially used as a screening proxy for entropy-based sensitivity indices, but only limited types of input uncertainty distributions were considered. 

To demonstrate for practical problems with a wide range of input distributions, a simple river flood physics model is considered. This model has been used by \cite{lamboni2013sobol} and \cite{roustant2017poincare} for demonstration of the use of Poincar\'{e} inequality for factor prioritization with DGSM, and as an example in GSA review \citep{iooss2015review}. 

This model simulates the height of a river, and flooding occurs when the river height exceeds the height of a dyke that protects industrial facilities. It is based on simplification of the 1D hydro-dynamical equations of SaintVenant under the assumptions of uniform and constant flow rate and large rectangular sections. The quantity of interest in this case is the maximal annual overflow $Y$:
\begin{equation} \label{eq:flood}   
   Y = Z_{\nu} + D_m - D_d - C_b  \quad  \text{with} \quad   D_m = \left(  \frac{Q}{B K_s \sqrt{(Z_m -Z_{\nu})/L}} \right)^{0.6}
\end{equation}
where the distributions of the independent input variables are listed in Table \ref{tab:floodmodel_input}. We have also added the exponential entropy $e^{H(X_i)}$ value for each variable in Table \ref{tab:floodmodel_input}. The analytical expressions for differential entropy of most probability distributions are readily available and well documented. A comprehensive list can be found in \cite{lazo1978entropy}. 

For a truncated distribution with the interval $[a, b]$, its differential entropy can be found as: \citep{moharana2020truncate}:
\begin{equation} \label{eq:truncatedH}   
   H_{\text{truncated}}(X) = - \int_a^b \frac{f(x)}{\Delta F} \ln \frac{f(x)}{\Delta F} dx
\end{equation}
where $f(x)$ and $F(x)$ are the original probability density function (PDF) and cumulative distribution function (CDF) of the random variable $X$ respectively. ${f(x)}/{\Delta F}$ is the PDF of the truncated distribution where $\Delta F = F(b) - F(a)$. As both $f(x)$ and $F(x)$ are analytically known, Eq \ref{eq:truncatedH} can then integrated numerically for the truncated entropy. 
\begingroup
\def\arraystretch{0.5}
\setlength{\tabcolsep}{4pt}
\begin{table}
	\caption{Entropy and distribution of the flood model input variables}
    \smallskip
    \centering
    \small
\makebox[\textwidth][c] {
	\begin{tabular}{ccllc}
		\toprule
		            & Variable &  Description & Distribution Function & Exponential Entropy $e^{H(X_i)}$ \\  
		\cmidrule(lr){1-5}   
	              $x_1$ & $Q$  & Maximal annual flowrate [m$^3$/s]	 & Truncated Gumbel $\mathcal{G}$(1013,558) on [500,3000]  & 2051 \\
	              $x_2$ & $K_s$  & Strickler coefficient [ - ]	 & Truncated Normal $\mathcal{N}$(30,$8^2$) on [15, $\infty$]  &  30 \\ 
	              $x_3$ & $Z_{\nu}$  & River downstream level [m]	 & Triangular $\mathcal{T}$(49,50,51)   &  1.65 \\ 
	              $x_3$ & $Z_m$  & River upstream level [m]	 & Triangular $\mathcal{T}$(54,55,56)   &  1.65 \\ 
	              $x_5$ & $D_d$  & Dyke height [m]	 & Uniform $\mathcal{U}$[7,9]   &  2 \\ 
	              $x_6$ & $C_b$  & Bank level [m]	 & Triangular $\mathcal{T}$(55,55.5,56)   &  0.825 \\
	              $x_7$ & $L$  & Length of the river stretch [m]	 & Triangular $\mathcal{T}$(4990,5000,5010)   &   16.5\\ 
	              $x_8$ & $B$  & River width [m]	 & Triangular $\mathcal{T}$(295,300,305)   &  8.24 \\ 
		\bottomrule
	\end{tabular}
	}
	\label{tab:floodmodel_input}
\end{table}
\endgroup

The results for the sensitivity indices are shown in Figure \ref{fig:Floodmodel}, where the variance-based $S_{T_i}$ are directly obtained from Table 5 of \cite{lamboni2013sobol}. According to \cite{lamboni2013sobol}, the variance-based index was based on $2 \times 10^7$ model evaluations. The total effect variance indices $S_{T_i}$ have upper bounds that are proportional to the derivative-based DGSM $\nu_i$. The $\nu_i$-variance upper bounds (UB) are also shown in Figure \ref{fig:Floodmodel}, where the optimal Poincar\'{e} constants are used as the proportional constants \citep{roustant2017poincare}.
\begin{figure}[!h]
	\centering
	\includegraphics[width=14 cm]{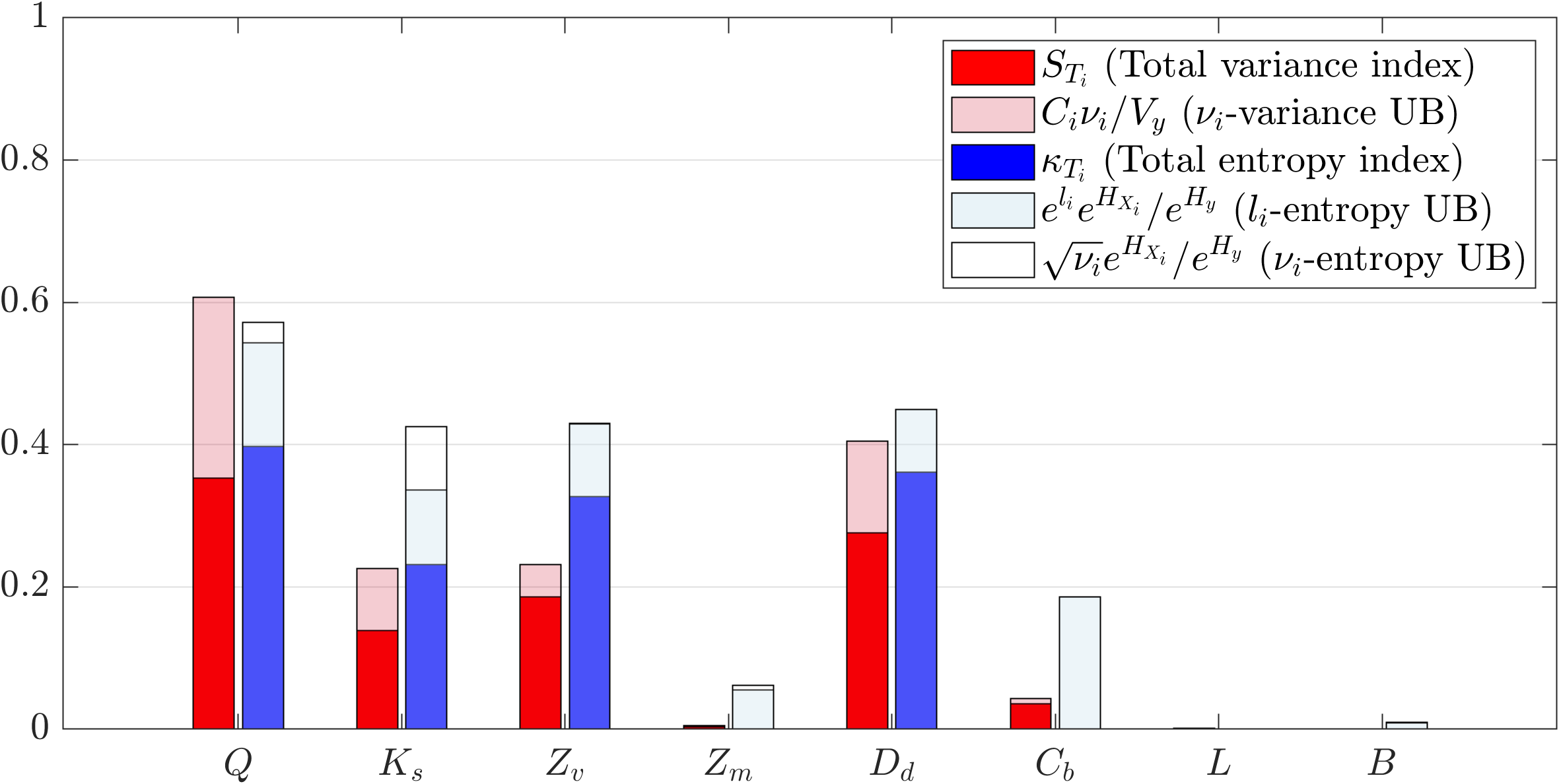}
	\caption{Total effect sensitivity indices for the flood model, from both variance-based $S_{T_i}$ and entropy-based $\kappa_{T_i}$ and their upper bounds (UB). Recall $l_i$ is the log-derivative sensitivity measure and $\nu_i$ is DGSM. Note that for entropy upper bound, we use $\sqrt{\nu_i}$ for a direct comparison with $\kappa_{T_i}$ as seen from Eq \ref{eq:kappa_DGSM}. Numerical data of the results presented here can be found in Appendix \ref{appendix:c}.}
	\label{fig:Floodmodel}
\end{figure}

In comparison, the exponential entropy based total effect sensitivity indices (eETSI) $\kappa_{T_i}$ are also shown in Figure \ref{fig:Floodmodel}, together with the derivative-based upper bounds for eETSI. It can be seen that, similar to the upper bounds for $S_{T_i}$, both $l_i$ and $\nu_i$ based entropy upper bounds are relatively close to $\kappa_{T_i}$, thus providing an efficient proxy for entropy-based total effect sensitivity indices. In this case, we can see that four input variables, $Q, K_s, Z_{\nu},  D_d$, have been identified as the most important variables for maximal annual overflow, with $L$ and $B$ of negligible influence. And this conclusion is consistent from both variance-based $S_{T_i}$ and entropy-based $\kappa_{T_i}$, and their upper bounds. 

Note that as it is computationally difficult to estimate $\kappa_{T_i}$ accurately for this eight dimensional problems, the four least influential variables $Z_m, C_b, L, B$ have been set at their mean values, thus reducing the problem to four dimensions, for the estimation of $\kappa_{T_i}$. Both derivative-based measures have been estimated using all set of input random variables, using the finite difference method with 1000 samples and a fixed increment step of $10^{-5}$. 

From this physics example, we can see that the derivative-based upper bounds can be used as a proxy for the total effect entropy sensitivity analysis for models with a variety of input distributions. Similar to the variance case where the optimal Poincar\'{e} constants can be estimated numerically using the R sensitivity package \citep{da2021book}, it is also straightforward to calculate the normalization constant $e^{H(X_i)}$ for entropy-bounds. Better still, $H(X_i)$ can often be analytically computed as entropy of many distribution functions are known in closed-form. The main computational cost for estimation of the total effect sensitivity proxies is thus the calculation of the partial derivatives, which is much more affordable than a direct estimation of the conditional variance or conditional entropy. 
\section{Conclusions}
\label{sec:6}
A novel global sensitivity proxy for entropy-based total effect has been developed in this paper. We have made use of the inequality between the entropy of the model output and its inputs, which can be seen as an instance of data processing inequality, and established an upper bound for the total effect entropy. This upper bound is tight for monotonic functions. It also provides similar input rankings for about three quarters of the 1000 random functions and can thus be regarded as a proxy for entropy-based total effect measure. Applied to a simplified flood analysis, the new proxy shows good ranking capability for physics problems with a variety of input distributions. 

The resulting log-derivative $l_i$-based proxy is computationally cheap to estimate. If the the derivatives are available, e.g. as the output of a computational code, the proxy would be readily available. Even if a Monte Carlo based approach is used, the computational cost is typically in the order $10^3$ or $10^4$, as compared to $10^7$ or $10^8$ for entropy-based indices. This computational advantage would be even more important for high dimensional problems. However, the numerical assessment has been limited to low dimensional examples, because it becomes prohibitively inefficient to compute conditional entropies with the Monte Carlo based histogram approach adopted in this paper. In subsequent works, we will explore neural network based density estimation techniques for efficient approximation of information-theoretic quantities and extend the numerical assessment of the upper bound to high dimensions.

In many physical applications, the input variables often have a dependence structure due to physical constraints. Unlike DGSM-based proxy, the inequality link between derivatives and entropy presented in this paper does not require the random inputs to be independent. This point is subject to further research, where numerical assessment needs to conducted to explore the screening power of $l_i$-based upper bound for dependent inputs. 

Drawing on the criticism of differential entropy based sensitivity indices, we propose to use its natural exponentiation and the resulting sensitivity measure $\kappa_{T_i}$ possesses many desirable properties for GSA, such as quantitative, moment independent and easy to interpret. The G-function example shows that $\sum \kappa_{T_i}$ is close to one for this product function, as opposed to the variance-based indices where the sum of sensitivity indices is equal to one for additive functions. As exponential entropy can be seen as a geometric mean of the underlying distribution, i.e. $e^{H(X)} = e^{\mathbb{E}[-\ln f(x)]}$, one of the future research is to examine the unique properties of GSA indices based on exponential entropy, and explore its decomposition characteristics for sensitivity analysis of different interaction orders. 

\section*{Acknowledgment}
For the purpose of open access, the author has applied a Creative Commons Attribution (CC BY) licence to any Author Accepted Manuscript version arising. The authors are grateful to Dr Sergei Kucherenko from Imperial College London and Dr Marco De Angelis from University of Strathclyde for comments on an early draft of this article, and Dr Bertrand Iooss from EDF and Dr Eric Hall from University of Dundee for useful discussions and reviewing of this work. 
\section*{Data availability statement}
The authors confirm that the data supporting the findings of this study are available within the article.

\bibliographystyle{unsrtdin}
\bibliography{references}  

\begin{appendices}
\section{Numerical estimation of entropy}
\label{appendix:a}

\renewcommand{\theequation}{\thesection.\arabic{equation}}
\setcounter{equation}{0}

Adopting the approach from \cite{moddemeijer1989estimation}, the $xy$-plane is gridded by equal size cells ($\Delta x \times \Delta y$) with coordinates ($i$, $j$). The probability of observing a sample in cell ($i$, $j$) is: 
\begin{equation} \label{eq:Est.1}   
      p_{ij} = \iint\limits_{\text{cell} (i,j)} f(x,y)dxdy \approx f(x_i,y_j)\Delta x \Delta y
\end{equation}
where $(x_i,y_j)$ is the centre of the cell. 

Assuming the jPDF is approximately constant within a cell, the joint entropy can be represented as:
\begin{equation} \label{eq:Est.2}
   \begin{split}
       H(X,Y) &  =  - \int  f(x,y) \ln f(x,y) dx dy  \\
                    &  \approx  - \sum f(x_i,y_j) \ln f(x_i,y_j) \Delta x \Delta y \\
                    &  \approx  - \sum p_{ij} \left( \ln p_{ij} - \ln (\Delta x \Delta y) \right) \\
                  &  \approx  - \sum \left( \frac{k_{ij}}{N} \ln \frac{k_{ij}}{N} \right) + \ln (\Delta x \Delta y)
  \end{split} 
\end{equation}
where $k_{ij}$ represents the number of samples observed in the cell ($i$, $j$), and $N$ is the total number of samples. 

Similarly, the conditional entropy can be approximated as:
\begin{equation} \label{eq:Est.3}
          \begin{split}
       H(Y|X) &  =  - \int  f(x,y) \frac{\ln f(x,y)}{f(x)} dx dy  \\
                    &  \approx  - \sum p_{ij} \left( \ln p_{ij} - \ln p_i-\ln \Delta y \right) \\
                  &  \approx  - \sum \left( \frac{k_{ij}}{N} \ln \frac{k_{ij}}{k_i} \right) + \ln \Delta y
  \end{split} 
\end{equation}
where $k_i = \sum_j k_{ij}$ and similar expressions can be derived when $\mathbf X$ is a vector variable. 

\section{Analytical derivations for monotonic examples }
\label{appendix:b}
We present in this section the derivation of the total entropy inequality from Eq \ref{eq:proposition} for the five monotonic functions. All input variables are assumed to have the same uniform distribution for examples 1 - 4, i.e. $X_i \sim \mathbb{U}(0,1)$, while Gaussian distributions are used for example 5. For verification purposes, all the examples in this section are chosen to have tractable expressions for both the integral of derivatives and the conditional entropies. 

\paragraph*{Example 1.}  $y=x_1 + e^{x_2}$ \\ 
For variable $X_1$, $H_{T_1} =\mathbb{E}_{X_2}\left[H(Y | X_2 ) \right] = \mathbb{E}_{X_2}\left[H(X_1 ) \right] = H(X_1) = 0$ because the differential entropy remains constant under addition of a constant ($e^{x_2}$ is a constant for $H(Y|X_2 )$). For right hand side of the inequality in Eq \ref{eq:proposition}, we have $g_{x_1} = \partial y/\partial x_1 = 1$, so $\mathbb E \left[ \ln {\left| g_{x_1} \right|} \right] + H(X_1) = 0 + 0 = 0$. \\
For variable $X_2$, $H_{T_2} = \mathbb{E}_{X_1}\left[H(Y | X_1 ) \right]  =\int_{X_1} H(Y|X_1 = x_1) dx_1 = \int_{X_1} \left[ - \int_1^e \frac{1}{y}\ln\frac{1}{y}  dy \right] dx_1 = 1/2$,  where $1/y$ is the conditional PDF of $Y|X_1$ because the transformed variable $V=e^{X}$ has a PDF: $V \sim 1/v, 1 < v < e$. Similarly, we have $\partial y/\partial x_2 = e^{x_2}$, so $\mathbb E \left[ \ln {\left| g_{x_2} \right|} \right] - H(X_2) = 1/2 - 0 = 1/2$, which proves the equality as expected. \\
\paragraph*{Example 2.}  $y=x_1 \times x_2$\\
Consider the function $y=x_1 \times x_2$, where the partial derivatives are $\partial y/\partial x_1 = x_2$ and $\partial y/\partial x_2 = x_1$. For $X_i \sim \mathbb{U}(0,1)$, the expected value  $l_i = \mathbb E \left[ \ln {\left| g_{x_i} \right|} \right]$ are $-1$ for both $x_1$ and $x_2$. Recall that the differential entropy increases additively upon multiplication with a constant, $H_{T_1}   = \mathbb{E}_{X_2}\left[ H(X_1) + \ln|X_2 = x_2| \right] = \int_0^1 \ln{x_2} d{x_2} = -1$. Same results can be obtained for $X_2$ due to the symmetry between $X_1$ and $X_2$. Therefore, $H_{T_i}=\mathbb{E}[H(Y|\mathbf{X}_{\sim i}) ]= H(X_i) + l_i$ as given by Eq \ref{eq:proposition}. 
\paragraph*{Example 3.}  $y=x_1 + 3x_2$\\
Consider the function $y=x_1 + 3x_2$, where the partial derivatives are 1 and 3  for $x_1$ and $x_2$ respectively. For $X_i \sim \mathbb{U}(0,1)$, the expected value  $l_i = \mathbb E \left[ \ln {\left| g_{x_i} \right|} \right]$ can be integrated analytically as 0 and $\ln3 \simeq 1.0986$ respectively. It is straightforward to show that, as in previous examples, $H_{T_1} = \mathbb E \left[ \ln {\left| g_{x_1} \right|} \right] - H(X_1) = 0$ and $H_{T_2}  = \ln3$, which is the same as $l_2 + H(X_2)$ . 
\paragraph*{Example 4.}  $y= x_1x_2^r$\\
Consider the product function $y= x_1x_2^r$, where $r \geq 1$ (for $r=1$, we recover the function in example 2).  For $X_1, X_2 \sim \mathbb{U}(0,1)$, it is straightforward to show that $\mathbb E \left[ \ln {\left| g_{x_1} \right|} \right]  = - r$ and $\mathbb E \left[ \ln {\left| g_{x_2} \right|} \right] = \ln{r} -r$. \\
For variable $X_1$, $H_{T_1} = \mathbb{E}_{X_2}\left[ H(X_1) + \ln|x_2| \right] = \int_0^1 \ln{x_2^r} d{x_2} = -r$, which is the same as $l_1+ H(X_1) = -r + 0 = -r$.  \\
For variable $X_2$, $H_{T_2} = \mathbb{E}_{X_1}\left[ H(Y|X_1) + \ln|x_1|  \right ] =  \mathbb{E}_{X_1}\left[ - \int_0^1 p(y|x_1)\ln p(y|x_1) \right]  - 1 = \ln{r} - r $, where the conditional PDF of $Y|X_1 = x_1$ is $p(y|x_1) =\frac{1}{r}y^{\frac{1}{r}-1} $ because the transformed variable $V=x^{r}$ has a PDF: $V \sim \frac{1}{r}v^{\frac{1}{r}-1}, 0 < v < 1$. This is the same as $l_2 + H(X_2) = \ln{r} -r - 0 = \ln{r} -r$.
\paragraph*{Example 5.}  $y= \sum_{i=1}^n a_i x_i$\\
This linear function, $y= \sum_{i=1}^n a_i x_i$, has been used in \cite{krzykacz2001epistemic} to demonstrate the equivalence between entropy based and variance based sensitivity indices for Guassian random inputs, i.e. $X_i \sim \mathbb{N}(\mu_i,\sigma_i^2)$. In the case with independent inputs, the sensitivity index based on the conditional entropy can be obtained as $ H_{T_i}  =1/2 \ln(2 \pi e a_i^2 \sigma_i^2) $, and this is just the logarithmically scaled version of $H(X_i)$, i.e. $ H_{T_i}= H(X_i) + \ln {|a_i|}$. As $\mathbb E \left[ \ln {\left| g_{x_i} \right|} \right]= \ln|a_i|$ for this simple linear function, the special equality case is obtained for the inequality given in Eq \ref{eq:proposition}. 

\section{Numerical data for flood model}
\label{appendix:c}
\renewcommand{\thetable}{\Alph{section}\arabic{table}}
\setcounter{table}{0}
The numerical data for Figure \ref{fig:Floodmodel} are listed in Table \ref{tab:floodmodel_data}, where the intermediate results of $e^{l_i}$, $\nu_i$ and the Poincar\'{e} optimal constants are listed in Table \ref{tab:floodmodel_data2}. Based on Table \ref{tab:floodmodel_data}, the variable ranking results are shown in Table \ref{tab:floodmodel_ranking}. 
\begingroup
\def\arraystretch{0.5}
\setlength{\tabcolsep}{6pt}
\begin{table}[ht]
	\caption{Sensitivity results for the flood model.}
    \smallskip
    \centering
\makebox[\textwidth][c] {
	\begin{tabular}{ccccccc} 	
		\toprule
		            & Variable &  $S_{T_i}$	& $\nu_i$-variance 	& $\kappa_{T_i}$	 & $l_i$-entropy & 	$\nu_i$-entropy \\
		\cmidrule(lr){1-7}   
$x_1$ & $Q$       & 0.353 & 0.607 & 0.397 & 0.543 & 0.572 \\
$x_2$ & $K_s$     & 0.139 & 0.226 & 0.231 & 0.336 & 0.425 \\
$x_3$ & $Z_{\nu}$ & 0.186 & 0.232 & 0.327 & 0.429 & 0.430 \\
$x_3$ & $Z_m$     & 0.003 & 0.005 &   -    & 0.055 & 0.061 \\
$x_5$ & $D_d$     & 0.276 & 0.405 & 0.361 & 0.450 & 0.450 \\
$x_6$ & $C_b$     & 0.036 & 0.043 &    -   & 0.186 & 0.186 \\
$x_7$ & $L$       & 0.000 & 0.000 &    -   & 0.001 & 0.001 \\
$x_8$ & $B$       & 0.000 & 0.000 &   -    & 0.009 & 0.010\\
		\bottomrule
	\end{tabular}
	}
	\label{tab:floodmodel_data}
\end{table}
\endgroup
\begingroup
\def\arraystretch{0.5}
\setlength{\tabcolsep}{6pt}
\begin{table}[ht]
	\caption{Data for the derivative-based measures and Poincar\'{e} optimal constant}
    \smallskip
    \centering
\makebox[\textwidth][c] {
	\begin{tabular}{ccccc} 	
		\toprule
		            & Variable &  $e^{l_i}$	& $\nu_i$ 	&  Poincar\'{e} optimal constant \\
		\cmidrule(lr){1-5}   
$x_1$ & $Q$       & 0.001 & 0.000 & 3.93E+05 \\
$x_2$ & $K_s$     & 0.050 & 0.004 & 5.77E+01 \\
$x_3$ & $Z_{\nu}$ & 1.155 & 1.339 & 1.73E-01 \\
$x_3$ & $Z_m$     & 0.148 & 0.027 & 1.73E-01 \\
$x_5$ & $D_d$     & 1.000 & 1.000 & 4.05E-01 \\
$x_6$ & $C_b$     & 1.000 & 1.000 & 4.32E-02 \\
$x_7$ & $L$       & 0.000 & 0.000 & 1.73E+01 \\
$x_8$ & $B$       & 0.005 & 0.000 & 4.32E+00\\
		\bottomrule
	\end{tabular}
	}
	\label{tab:floodmodel_data2}
\end{table}
\endgroup
\begingroup
\def\arraystretch{0.5}
\setlength{\tabcolsep}{6pt}
\begin{table}[ht]
	\caption{Variable ranking for the flood model.}
    \smallskip
    \centering
\makebox[\textwidth][c] {
	\begin{tabular}{ccccccc} 	
		\toprule
		            & Variable &  $S_{T_i}$	& $\nu_i$-variance 	& $\kappa_{T_i}$	 & $l_i$-entropy & 	$\nu_i$-entropy \\
		\cmidrule(lr){1-7}   
$x_1$ & $Q$       & 1 & 1 & 1 & 1 & 1 \\
$x_2$ & $K_s$     & 4 & 4 & 4 & 4 & 4 \\
$x_3$ & $Z_{\nu}$ & 3 & 3 & 3 & 3 & 3 \\
$x_3$ & $Z_m$     & 6 & 6 & -  & 6 & 6 \\
$x_5$ & $D_d$     & 2 & 2 & 2 & 2 & 2 \\
$x_6$ & $C_b$     & 5 & 5 & -  & 5 & 5 \\
$x_7$ & $L$       & 7 & 7 & -  & 8 & 8 \\
$x_8$ & $B$       & 8 & 8 &  - & 7 & 7\\
		\bottomrule
	\end{tabular}
	}
	\label{tab:floodmodel_ranking}
\end{table}
\endgroup

\section{Interpretation of sensitivity results for Ishigami function}
\label{appendix:d}
\renewcommand{\thetable}{\Alph{section}\arabic{table}}
\setcounter{table}{0}
\renewcommand{\thefigure}{\Alph{section}.\arabic{figure}}
\setcounter{figure}{0}
For the Ishigami function results presented in Section 4.2 of the \textit{Main Text}, both indices have successfully identified the contribution of $X_3$ which is the lowest. However, the relative importance of $X_1$ and $X_2$ are opposite from $\kappa_i$ and $S_{T_i}$.  This difference between variance-based and entropy-based ranking for Ishigami function was also noted in \cite{auder2008entropy}, but without further explanation. 

We attempt to look at the difference using a simple function with $(x, z)$ as the input random variables:
\begin{equation} \label{eq:y=ax+b}   
      y(x, z) =  \alpha (z) x + \beta(z)
\end{equation}
which can be seen to represent the interaction effect of $x_1$ and $x_3$ in the Ishigami function. The conditional variance and entropy of $Y| Z=z$ are thus:
\begin{equation} \label{eq: y|z}   
      V(Y|Z=z) =  {\alpha^2 (z)} V(X)   \quad  \text{and}  \quad  H(Y|Z=z) = H(X)  + \ln {|\alpha (z)| }   
\end{equation}
where the translation invariance of variance and entropy are used. And the corresponding un-conditional $\mathbb{E}[V(Y|Z)]$ and $ \mathbb{E}[H(Y|Z)] $ can be found as:
\begin{equation} \label{eq: E(y|z)}   
      \mathbb{E}[V(Y|Z)] =   \mathbb{E}_Z [{\alpha^2 (z)}] V(X)   \quad  \text{and}  \quad   \mathbb{E}[H(Y|Z)] =  \mathbb{E}_Z [\ln {|\alpha (z)|} ] + H(X)
\end{equation}
From Eq \ref{eq: E(y|z)}, it is clear that despite the similarities, the expected variance $\mathbb{E}[V(Y|Z)]$ depends on $\alpha^2$ which grows much faster than $\ln|\alpha|$ for the conditional entropy $ \mathbb{E}[H(Y|Z)]  $ as $|\alpha|$ increases. 

In the Ishigami function, the effect of $x_3$ grows fast as $x_3$ approach its boundary, i.e. towards $-\pi$ and $\pi$. The much stronger interaction at the support boundary between $x_1$ and $x_3$ has also been noted in \cite{fruth2019support}, where the importance of $x_1$ was found to be much reduced if the support of the distribution of $x_3$ is reduced by $10$ percent. Putting $x_3 \sim [-\pi + \pi/10, \pi - \pi/10]$ would put $x_2$ as the most influential variable. 

From Eq \ref{eq: E(y|z)}, we can see that the interaction between $x_1$ and $x_3$ is more influential for variance due to the squaring effect, as compared to the entropy operation which takes logarithm of the interaction. This difference increases towards the boundary as the interaction between $x_1$ and $x_3$ gets stronger towards $-\pi$ and $\pi$. And this helps to explain why $x_1$ is the most influential variable for the variance-based $S_{T_i}$. This example highlights that, despite many similarities, entropy and variance are fundamentally different, for example, the variable interactions are processed differently between them.

\section{Illustration with group input variables}
\label{appendix:e}
\renewcommand{\thetable}{\Alph{section}\arabic{table}}
\setcounter{table}{0}
\renewcommand{\thefigure}{\Alph{section}.\arabic{figure}}
\setcounter{figure}{0}
The log-derivative $l_i$ based upper bound (UB) can be adjusted to work with groups of input variables. Consider an arbitrary subset of the input variables $z = (x_{i1}, \dots, x_{is}), 1 \le s < d $, $ \mathbb{E} \left[ \ln {\left| {\partial g (\mathbf{x})}/{\partial z} \right|} \right]$ can then be used for $l_i$ to assess the effect of the group $z$.  ${\partial g}/{\partial z} $ can be calculated using the total derivative of the function $g$, by fixing the remaining complementary set of $z$. 

We assess the application of $l_i$-based UB to the G-function. We follow \cite{campolongo2007effective} and consider a G-function with 9 input factors, all uniformly
distributed in the range [0,1], and build three test cases where groups have different features, as listed in Table \ref{tab:GroupCase}. Note that the values of $a_i$ are different from the case considered in the main text. 

More precisely, we estimate the exponentiated UB, i.e. ${e^{H(Z)}}/{e^{H(Y)}} e^{l_i}$. In this case, $e^{H(Z)} = 1$ for different input groups as they have the same distributions and are assumed to be independently sampled. In addition, $e^{H(Y)}$ are $2.625, 1.950, 2.625 $ for the three cases considered. 

The sensitivity results are given in Table \ref{tab:GroupCase_SA}. As compared with the Morris-based $\mu*$ and variance-based total effect $S_T$, the results confirm the fitness of the $l_i$-based UB to handle groups of factors for the G-function tested. A similar level of importance among the three groups has been successfully identified by the three methods. In the more complex case 3, where each of the three groups contains a mix of important and unimportant variables, the relative group importance from Morris-based $\mu*$ is different from $S_T$, while $l_i$-based UB gives exactly the same ranking as $S_T$. 
\begingroup
\def\arraystretch{0.5}
\setlength{\tabcolsep}{6pt}
\begin{table}[ht]
	\caption{Group effect assessment: parameters for the G-function}
    \smallskip
    \centering
\makebox[\textwidth][c] {
\begin{tabular}[c]{cccccccccc}
		\toprule
		& \multicolumn{9}{c}{$y = \prod_{i=1}^{9} (|4x_i - 2| + a_i )/(1+a_i)$} \\
		\\
       & \multicolumn{3}{c}{Group 1} & \multicolumn{3}{c}{Group 2} & \multicolumn{3}{c}{Group 3} \\ 
        \cmidrule(lr){2-4}   \cmidrule(lr){5-7}  \cmidrule(lr){8-10} 
       & $a_1$   & $a_2$   & $a_3$   & $a_4$   & $a_5$   & $a_6$   & $a_7$   & $a_8$   & $a_9$   \\
        \cmidrule(lr){2-10} 
Case 1 & 0.02    & 0.03    & 0.05    & 11      & 12.5    & 13      & 34      & 35      & 37      \\
           \cmidrule(lr){2-10} 
Case 2 & 0.02    & 0.04    & 0.06    & 0.03    & 0.05    & 0.07    & 34      & 35      & 37      \\
         \cmidrule(lr){2-10} 
Case 3 & 0.02    & 11      & 35      & 0.05    & 12.5    & 37      & 0.03    & 13      & 14     \\
		\bottomrule
	\end{tabular}
	}
	\label{tab:GroupCase}
\end{table}
\endgroup
\begingroup
\def\arraystretch{1}
\setlength{\tabcolsep}{6pt}
\begin{table}[ht]
	\caption{Group effect assessment: sensitivity results from $l_i$-based UB, with $\mu*$  and $S_T$ given by \cite{campolongo2007effective}}
    \smallskip
    \centering
\makebox[\textwidth][c] {
\begin{tabular}[c]{cccccccccc}
		\toprule
       & \multicolumn{3}{c}{Morris-based $\mu*$} & \multicolumn{3}{c}{Variance-based $S_T$} & \multicolumn{3}{c}{$l_i$-based UB} \\ 
        \cmidrule(lr){2-4}   \cmidrule(lr){5-7}  \cmidrule(lr){8-10} 
Group No. & 1      & 2      & 3     & 1      & 2     & 3     & 1        & 2       & 3       \\
Case 1    & 7.948  & 1.058  & 0.708 & 0.995  & 0.010 & 0.001 & 4.425    & 0.349   & 0.126   \\
Case 2    & 42.339 & 32.656 & 2.735 & 0.694  & 0.686 & 0.001 & 5.917    & 5.861   & 0.170   \\
Case 3    & 8.108  & 7.083  & 6.364 & 0.436  & 0.393 & 0.429 & 1.664    & 1.604   & 1.632  \\
		\bottomrule
	\end{tabular}
	}
	\label{tab:GroupCase_SA}
\end{table}
\endgroup

\end{appendices}

\end{document}